\renewcommand{\d}{\partial}
\newcommand{\dbar}{\overline{\partial}}
\newcommand{\ddbar}{\sqrt{-1}\d\overline{\d}}
\newcommand{\ddbarb}{\sqrt{-1}\d_b\overline{\d}_b}
\newtheorem{thm}{Theorem}
\newtheorem{prop}[thm]{Proposition}
\newtheorem{lem}[thm]{Lemma}
\newtheorem{cor}[thm]{Corollary}
\newtheorem*{cl}{Claim}
\newtheorem{rem}[thm]{Remark}
\theoremstyle{definition}
\newtheorem{defn}[thm]{Definition}
\renewcommand{\[}{\begin{equation}}
	\renewcommand{\]}{\end{equation}}
\newcommand{\al}{\alpha}
\newcommand{\be}{\beta}
\newcommand{\ga}{\gamma}
\newcommand{\la}{\lambda}
\newcommand{\ka}{\kappa}
\newcommand{\ep}{\epsilon}
\newcommand{\Ga}{\Gamma}
\newcommand{\La}{\Lambda}
\newcommand{\vp}{\varphi}
\newcommand{\vep}{\varepsilon}
\newcommand{\NN}{\mathbb{N}}
\newcommand{\QQ}{\mathbb{Q}}
\newcommand{\RR}{\mathbb{R}}
\newcommand{\CC}{\mathbb{C}}
\newcommand{\PP}{\mathbb{P}}
\newcommand{\Nu}{\mathcal{V}}
\newcommand{\sL}{\mathcal{L}}
\newcommand{\sF}{\mathcal{F}}
\newcommand{\TT}{\mathbb{T}}
\numberwithin{equation}{section}
\author{P. Sivaram}
\address{Department of Mathematics, Indian Institute of Science, Bangalore-560012, India.}
\email{sivaramp@iisc.ac.in}
\begin{document}
	\title[$L^\infty$ estimate for transverse Complex Monge-Amp\`ere equation and application]{A note on the PDE approach to the $L^\infty$ estimates for complex Hessian equations on transverse K\"ahler manifolds}
	\begin{abstract}
	We adapt the PDE approach of Guo-Phong-Tong and Guo-Phong-Tong-Wang \cite{GPT21,GPTW21} to prove an  $L^\infty$ estimate for transverse complex Monge-Amp\`ere equations on homologically orientable transverse K\"ahler manifolds. As an application, we obtain a purely PDE proof of the regularity of Calabi-Yau cone metrics on $\mathbb{Q}$-Gorenstein $\mathbb{T}$-varieties. 
	\end{abstract}
	\maketitle
	
\section{Introduction}
	The celebrated work of Yau \cite{Yau} on the Calabi conjecture sparked a surge of interest in various aspects of the complex Monge-Ampère equation and its applications. Degenerate Monge-Amp\`ere equations, which already appear in Yau's original work, are unavoidable when dealing with singular varieties or cohomology classes that may not be K\"ahler (cf. \cite{BBEGZ,FGS}). We refer the reader to the survey article \cite{sps} and the references therein for more on complex Monge-Amp\`ere equation on compact K\"ahler manifolds.
	
	The $L^\infty$ estimate was the hardest part in solving the complex Monge-Amp\`ere equation and it was achieved in \cite{Yau} by using the Moser iteration argument. This argument works if the right hand-side is in $L^p$ with $p>n$, where $n$ is the complex dimension of the manifold. In an important breakthrough,  Kolodziej \cite{Ko} obtained the $L^\infty$ estimate, assuming that the right hand-side is in $L^1(\log L)^p$ with $p>1$, and his proof relied completely on techniques from pluripotential theory. Kolodziej's methods were later extended to the singular and degenerate settings (cf. \cite{EGZ1, EGZ2}). The pluri-potential theoretic methods were recently extended to the Sasakian setting by Tran-Trung Nghiem \cite{TTR}. For a long time a purely PDE approach to the sharp $L^\infty$ estimates was lacking. The gap was finally filled in a series of works by Guo-Phong-Tong and Guo-Phong-Tong-Wang  \cite{GPT21,GPTW21}. The main goal of this note is to extend the results in \cite{GPT21,GPTW21} to degenerate transverse complex Monge-Amp\`ere equations on homologically orientable transverse K\"ahler manifolds, which includes Sasakian manifolds as a special case.
	
	Before we state our main theorem let us introduce some definitions on transverse K\"ahler manifolds. 
	The setting is as follows. The reader can refer to the section below for more details. Let $(\tilde{S},\tilde{\sF})$ be a homologically orientable transverse  K\"ahler manifold with a foliation $\tilde{\sF}$ of dimension one. 
	For $ t \in (0,1], $ we define $$ \omega_t=\omega+t\chi ~\text{ and } \omega_{t,\vp}=\omega_t+\ddbarb \vp, \text{ for }\vp \in C^2_b(\tilde{S}),$$
	where $ \omega $ is a closed basic $ (1,1) $ form which is \textit{nef} and $ \chi $ is the given transverse K\"ahler form. Then by definition $[\omega_t]$ is a transverse K\"ahler class, but $\omega_t$ may not be a transverse K\"ahler form. Since the foliation $\tilde{\sF}$ is  homologically orientable, by \cite{Masa,mol} there exists a nowhere vanishing vector field $\xi$ tangent to $\tilde{\sF}$ at all points and a Riemannian metric $g$ such that $\xi$ is a Killing field with respect to $g$. And we define a $1$-form $\eta$ on $\tilde{S}$ by $$\eta(\cdot):=\frac{g(\xi,\cdot)}{g(\xi,\xi)}.$$ Then by definition $\eta(\xi)=1$ and $d\eta$ is a basic form.
	
	Our goal is to obtain an $ L^\infty $-estimate for the solutions $\vp_t$ that is independent of $ t\in (0,1] $ of the following transverse complex Monge-Amp\`ere equations \[\label{tcma nef}\begin{cases} (\omega_t+\ddbarb\vp_t)^n\wedge\eta =c_te^F\chi^n\wedge\eta, \\ \sup_{\tilde{S}}\vp_t=0,\end{cases} \] where $$c_t=\frac{V_t}{\int_{\tilde{S}}\chi^n\wedge\eta},~V_t=\int_{\tilde{S}}
	\omega_t^n\wedge\eta$$ are positive constants, and $ F $  is a real, smooth, basic function,  normalized by $$ \int_{\tilde{S}}e^F\chi^n\wedge\eta=\int_{\tilde{S}}\chi^n \wedge\eta.$$
	Since $d\eta$ is basic, $V_t$ is independent of the representative chosen from the class $[\omega_t]$. Now we state our main result. 
\begin{thm}\label{mt}
	Let $(\tilde{S},\tilde{\sF},\chi)$ be a transverse K\"ahler manifold which is homologically orientable and $\omega$ be a basic nef class. For a fixed $p>n,$ there exist a positive constant $C=C(n,p,\xi,\omega,\chi,\|e^F\|_{L^1(\log L)^p})$ such that for all $t\in (0,1]$, we have $$0\leq -\vp_t+\Nu_t\leq C, $$ where $\Nu_t$ is the upper envelope of $\omega_t$ defined by   \[\label{nef envelope} \Nu_t(x):=\sup\{v(x):v\in PSH(\tilde{S},\omega_t,\xi),v\leq 0\}.  \]
\end{thm}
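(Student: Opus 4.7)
The lower bound $-\vp_t+\Nu_t\ge 0$ is automatic: since $\omega_t+\ddbarb\vp_t\ge 0$ and $\sup_{\tilde S}\vp_t=0$, the function $\vp_t$ is itself an admissible competitor in the envelope defining $\Nu_t$, so $\vp_t\le\Nu_t$ pointwise. For the upper bound, the plan is to adapt the auxiliary-Monge-Amp\`ere/maximum-principle strategy of Guo-Phong-Tong-Wang \cite{GPTW21} to the transverse K\"ahler setting, measuring sublevel sets of the solution relative to $\Nu_t$ rather than relative to $\sup\vp_t$; using the envelope in place of the supremum is the correct substitute for the positivity of $\omega_t$ on $\tilde S$ when $[\omega]$ is only nef.

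\smallskip

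For each $s\ge 0$, pick a smooth cutoff $\tau_s:\RR\to[0,1]$ with $\tau_s\equiv 1$ on $[1,\infty)$ and $\tau_s\equiv 0$ on $(-\infty,0]$, and solve the transverse complex Monge-Amp\`ere equation
\[
(\chi+\ddbarb\psi_{s,t})^n\wedge\eta=A_{s,t}^{-1}\,\tau_s(-\vp_t+\Nu_t-s)\,e^F\,\chi^n\wedge\eta,\qquad \sup_{\tilde S}\psi_{s,t}=0,
\]
with $A_{s,t}$ the normalization making both sides have equal integral. Solvability and $C^{2,\al}_b$-regularity of $\psi_{s,t}$ follow (after a preliminary smoothing of $\Nu_t$) from the transverse Calabi-Yau theorem of El Kacimi-Alaoui, and the solution is basic since the right-hand side is. Next, apply the maximum principle to the basic test function
\[
\Phi_{s,t}=-\vep\,(-\psi_{s,t}+\La)^{\frac{n}{n+1}}-\vp_t+\Nu_t-s,\qquad \vep,\La>0.
\]
If $\sup_{\tilde S}\Phi_{s,t}>0$, its maximum is attained at a point $x_0$ where $-\vp_t+\Nu_t>s$, so $\tau_s>0$ there; at $x_0$, $\ddbarb\Phi_{s,t}\le 0$. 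Contracting with $\omega_{t,\vp_t}^{n-1}\wedge\eta/(n-1)!$ and using the AM-GM inequality between the transverse trace and determinant, together with the two Monge-Amp\`ere equations satisfied by $\vp_t$ and $\psi_{s,t}$, yields at $x_0$ a pointwise algebraic bound of the schematic form $c_{n,\vep,\La}\le A_{s,t}^{1/n}e^{-F/n}$. Combining this bound with the $L^1(\log L)^p$ hypothesis on $e^F$ translates it into a De Giorgi-type sublevel-set inequality for the distribution function of $-\vp_t+\Nu_t$, which can be iterated to a finite scale and produces the uniform estimate $\sup_{\tilde S}(-\vp_t+\Nu_t)\le C$ with $C$ depending only on $(n,p,\xi,\omega,\chi,\|e^F\|_{L^1(\log L)^p})$.

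\smallskip

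The principal obstacle is the a priori non-smoothness of $\Nu_t$: to run the maximum-principle computation at $x_0$ rigorously one needs $\Nu_t$ to be touched from below by a smooth basic $\omega_t$-psh function, which would follow from $C^{1,1}$-regularity of the transverse nef envelope. Establishing this by adapting the arguments of Berman and Chu-Tosatti-Weinkove to the foliated setting is the main new technical ingredient; alternatively, one can work with a Kiselman-type regularization $\Nu_t*\rho_\delta$ at each stage and pass to the limit $\delta\to 0$. A secondary, more bookkeeping-type obstacle is to re-derive the analytic machinery of \cite{GPTW21} (existence and uniqueness for the auxiliary transverse Monge-Amp\`ere equations, integration by parts against the volume form $\chi^n\wedge\eta$, and the maximum principle on basic functions) in the transverse setting; the homological orientability hypothesis and the $\xi$-invariance of every object involved make these adaptations essentially formal but they must be checked.
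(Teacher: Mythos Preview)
Your overall architecture (auxiliary Monge--Amp\`ere equation, maximum principle on a test function of the form $-\vep(-\psi+\La)^{n/(n+1)}-(\vp_t-\Nu_t+s)$, then De~Giorgi iteration) matches the paper's, but two choices you make prevent the argument from closing.

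\textbf{The auxiliary equation must be posed with $\omega_t$, not $\chi$.} When you contract $\ddbarb\Phi_{s,t}\le 0$ with $\omega_{t,\vp_t}^{n-1}\wedge\eta$, the term coming from $-\vp_t+\Nu_t$ contributes $-n+\La_{\omega_{t,\vp_t}}\omega_{t,\Nu_t}$, while the $\psi_{s,t}$--term produces both a good piece $\frac{\vep n}{n+1}(-\psi_{s,t}+\La)^{-1/(n+1)}\La_{\omega_{t,\vp_t}}\chi_{\psi_{s,t}}$ and a bad piece $-\frac{\vep n}{n+1}(-\psi_{s,t}+\La)^{-1/(n+1)}\La_{\omega_{t,\vp_t}}\chi$. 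To absorb the bad piece you would need $\omega_{t,\Nu_t}\ge c\,\chi$ for a constant $c>0$ independent of $t$; but $\omega_t\to\omega$ is merely nef, so no such lower bound exists. The paper avoids this by solving the auxiliary equation in the \emph{same} class, i.e.\ $(\omega_t+\ddbarb\psi_{t,k})^n\wedge\eta=c_t\,\tau_k(-\vp_t+\Nu_t-s)A_{s,k,\be}^{-1}e^F\chi^n\wedge\eta$. Then both positive terms are traces of $\omega_t$--forms and the choice $\La=\bigl(\frac{n\vep}{n+1}\bigr)^{n+1}$ makes the bad term cancel exactly against $\La_{\omega_{t,\vp_t}}\omega_{t,u_{t,\be}}$; this is precisely the Guo--Phong--Tong--Wang modification for nef classes, and it is not optional here.

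\textbf{Regularising $\Nu_t$.} You flag the non-smoothness of $\Nu_t$ as the main obstacle and propose either proving $C^{1,1}$ regularity of the transverse envelope or using a Kiselman convolution. The paper takes a third route that is both simpler and already available: it replaces $\Nu_t$ in the test function by the smooth $\omega_t$--psh solutions $u_{t,\be}$ of $(\omega_t+\ddbarb u_{t,\be})^n\wedge\eta=e^{\be u_{t,\be}}\chi^n\wedge\eta$, and uses Proposition~\ref{envelope approximation} (the Berman ``$\be\to\infty$'' approximation) to send $u_{t,\be}\to\Nu_t$ uniformly only \emph{after} the pointwise inequality is established. This sidesteps envelope regularity entirely, whereas a Kiselman mollification would not in general preserve global basic $\omega_t$--plurisubharmonicity on $\tilde S$.

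Finally, your sketch jumps from a ``pointwise algebraic bound'' directly to De~Giorgi. In the paper the intermediate steps are substantive: the maximum principle gives $(-\vp_t+\Nu_t-s)^{(n+1)/n}\le C_nA_s^{1/n}(-\psi_{t,k}+1+A_s)$; applying the basic $\alpha$--invariant (Zhang) to $\psi_{t,k}$ yields the Trudinger--type inequality of Lemma~\ref{lemma nef}; a separate Jensen/$\alpha$--invariant argument (Lemma~\ref{et bound}) bounds the energy $E_t$ uniformly in $t$; only then do Young's inequality and H\"older produce the recursion $rf(s+r)\le B_0 f(s)^{1+\delta_0}$ with $\delta_0=\frac{p-n}{pn}$ needed for Lemma~\ref{degiorgi lemma}. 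These steps should be made explicit.
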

	Note that proof for the above theorem extends easily to more general complex Hessian equations on transverse K\"ahler manifolds of the kind considered in  \cite{GPT21,GPTW21}. We outline the argument in the Remark \ref{rem:more-gen-comm-hessian}.  
\begin{rem}
	There is nothing special about the assumption on the dimension of the foliation $\tilde{\sF}$, except it provides an application as the Corollary \ref{main thm}. Once the foliation is transverse K\"ahler and homologically orientable, then by \cite{mol,Masa} we have the volume form $\nu$ on the leaf such that $d\nu$ is basic. And then we have the solution to the transverse complex Monge-Amp\`ere equation by \cite{aziz}. So, the results of the Theorem \ref{mt} and the Remark \ref{rem:more-gen-comm-hessian} are true for higher dimensional foliations which are homologically orientable transverse K\"ahler as well. Just for simplicity and readability we assume that the dimension of the foliation is one. 
\end{rem}	
	Next, we discuss an application of the above estimate to the regularity of conical Ricci flat metrics on an affine $\QQ$-Gorenstein $\mathbb{T}$-variety. We first recall some definitions and introduce some notations. 
	\begin{defn}
	Let $Y$ be a normal affine variety, and let $\mathbb{T}_{\CC} \approx (\CC^*)^k$ be a complex torus. We say that a $\mathbb{T}_\CC$-action on $Y$ is \textit{good} if it is effective and there is a unique point $y_0\in Y$ that lies in the orbit closure of any $\mathbb{T}$-orbit. 
\end{defn}
\begin{defn}
	A normal variety $Y$ is called \textit{$\QQ$-Gorenstein} if the canonical bundle $K_Y$ is $\mathbb{Q}$-Cartier ie. there exist a integer $m$ such that $K_Y^{\otimes m}$ is Cartier. We say that it has Kawamata log terminal $(klt)$ singularities if there exists a resolution $\pi:\tilde Y\rightarrow Y$ is a resolution with $$K_{\tilde Y}  = \pi^*K_Y + \sum_{i}a_iE_i,$$ where $a_i > - 1$ for all $i$. 
\end{defn}

We say that $Y$ is a $\mathbb{T}$-variety, if it admits a good action of $\mathbb{T}_{\CC}$ and is $\QQ$-Gorenstein. In this note we will be concerned with  {\em polarized} $\TT$-varieties $(Y,\xi)$, namely $\TT$-varieties decorated with a {\em Reeb vector field}. Throughout, we will also assume that $Y$ has klt singularities. If we let $J$ denote the complex structure on the regular set $Y_{reg}$, then $-J\xi$ is a vector field that generates a holomorphic $\RR_{+}$ action with a repulsive fixed point $y_0$.
 
\begin{defn}
	\begin{enumerate}
		\item A function $f:Y\to \RR_+$ is said to be \textit{radial with respect to $\xi$} or $\xi$-radial if $$\sL_\xi f=0 \text{ and } \sL_{-J\xi}f=f.$$ 
		\item We say that $f$ is a \textit{conical K\"ahler potential with respect to $\xi$} if it can be expressed as $f=r^2$ for a $\xi$-radial function $r$ on $Y$ and $f$ is smooth and strictly plurisubharmonic on $Y_{reg}$ and continuous on $Y.$ For such a function $f$, we say that $$\omega = \ddbar f$$ is a conical K\"ahler metric on $Y$.
	\end{enumerate}
\end{defn}
	In this note, we are interested in {\em Ricci flat} conical K\"ahler metrics. We say that a $\mathbb{T}$-invariant conical K\"ahler metric $\omega=\ddbar r^2$ on $Y_{reg}$, for some $r\in C^0(Y)$ is  Ricci flat if there exist a $\mathbb{T}$-invariant no-where vanishing section $\Omega\in H^0(Y,mK_Y)$, such that \[\label{flat} \omega^{n+1}=\left((\sqrt{-1})^{m(n+1)^2}\Omega\wedge\bar{\Omega}\right)^\frac{1}{m}, \text{ for some } m\geq 1 \]  in the sense of Bedford-Taylor(cf. \cite{BT}). We say that $\omega$ is weakly Ricci flat if $r$ is only locally bounded. Note that the above equation still makes sense.

	Weak Ricci flat metrics arise naturally in the study of tangent cones of Gromov-Hausdorff limits of non-collapsed K\"ahler-Einstein manifolds on Fano manifolds (cf. \cite{DS}). If the link of the tangent cone is smooth, or equivalently if the only singularity of the tangent cone is at the vertex, then the link is a Sasakian manifold with a Sasakian-Einstein metric. Building on the pioneering work of  Martelli-Sparks-Yau \cite{MSY} on the volume functional, Collins and Szekelyhidi \cite{CS18} extended the notion of $K$-stability to affine Calabi-Yau cones, and proved  that a polarized affine $\TT$-variety $(Y,\xi)$ with an isolated singularity $y_0$ admits a Ricci flat K\"ahler cone metric if and only it is $K$-stable. The reader can refer to the excellent expository article \cite{LLX} for more details about this circle of ideas relating normalised volume to Ricci flat cones. For the case of toric manifolds, Berman \cite{B20} extended this correspondence to general toric $\mathbb{T}$-varieties with possibly non-isolated singularities. Berman first constructed a weak Ricci flat cone metric, and then proved that the metric was smooth on the regular locus.  
	
	It is natural to expect that any weak Ricci flat cone metric is in fact smooth on $Y_{reg}$ (cf. \cite{LLX}). Indeed as an immediate consequence of our main theorem we obtain the following corollary. 
\begin{cor}\label{main thm}
	Let $(Y,\mathbb{T},\xi)$ be a complex affine normal variety which is $\QQ$-Gorenstein. And $\omega=\ddbar r^2$ is a weak Ricci-flat K\"ahler metric which is conical with respect to $\xi$ on $Y$. Then $r^2$ is smooth on $Y_{reg}.$
\end{cor}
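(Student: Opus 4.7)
The plan is to reduce Corollary \ref{main thm} to an application of Theorem \ref{mt} on a smooth transverse K\"ahler model obtained from an equivariant log resolution of singularities. The holomorphic $\CC^\ast$-action on $Y$ generated by $\xi - \sqrt{-1}\,J\xi$, combined with the contracting $\RR_+$-action of $-J\xi$, exhibits $Y\setminus\{y_0\}$ as a Seifert-type $\CC^\ast$-bundle over its link $S$. Since $Y$ may be singular away from $y_0$, I would pass to a $\TT_{\CC}$-equivariant log resolution $\pi:\tilde Y \to Y$ on which the lifted $\xi$-action and the dilation $-J\xi$ combine to produce a smooth compact manifold $\tilde S$ carrying a foliation $\tilde{\sF}$ by $\xi$-orbits, a transverse K\"ahler form $\chi$ induced by any smooth $\TT$-invariant K\"ahler metric on $\tilde Y$, and the homologically orientable structure provided by the nowhere vanishing Killing field $\xi$.

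Next, I would translate the weak Ricci-flat equation $\omega^{n+1} = ((\sqrt{-1})^{m(n+1)^2}\Omega\wedge\bar\Omega)^{1/m}$ into a transverse complex Monge-Amp\`ere equation on $\tilde S$. Writing $\pi^\ast(r^2) = r_0^2 + \vp$ for a smooth reference radial potential $r_0^2$ and reducing by the $\RR_+$-action, the equation takes the form
\[
(\omega_0 + \ddbarb \vp)^n \wedge \eta = c\, e^F\, \chi^n \wedge \eta,
\]
where $\omega_0$ is a basic nef $(1,1)$-form representing the pull-back class and $F$ encodes both a smooth Ricci potential and the log-discrepancy divisor. The klt hypothesis $a_i > -1$ in $K_{\tilde Y} = \pi^\ast K_Y + \sum a_i E_i$ bounds $e^F$ locally by $\prod|s_i|^{-2\al_i}$ with $\al_i < 1$, so $e^F \in L^{1+\delta}$ for some $\delta > 0$, and in particular $e^F \in L^1(\log L)^p$ for every $p > n$.

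Applying Theorem \ref{mt} to the family $\omega_t = \omega_0 + t\chi$ for $t \in (0,1]$ together with a smooth approximation $F_t$ of $F$ then yields a uniform bound $\|\vp_t\|_{L^\infty(\tilde S)} \leq C$ independent of $t$. Passing to the limit $t \to 0^+$ produces a bounded basic solution $\vp$ on $\tilde S$ whose descent to $Y\setminus\{y_0\}$ recovers the potential $r^2$ up to a smooth radial function. Since the Monge-Amp\`ere equation is uniformly elliptic with smooth coefficients on $\pi^{-1}(Y_{\mathrm{reg}})$, classical higher-order estimates (Evans-Krylov, Schauder, and bootstrapping in the transverse direction) upgrade the $L^\infty$-bound to $C^\infty$-regularity of $r^2$ on $Y_{\mathrm{reg}}$.

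The main obstacle I anticipate is the geometric setup: building the smooth transverse K\"ahler model $(\tilde S, \tilde{\sF}, \chi)$ from an equivariant resolution of a singular affine $\TT$-variety and verifying rigorously that the weak Ricci-flat equation descends to an equation of the form covered by Theorem \ref{mt} with $e^F \in L^1(\log L)^p$. Once this bookkeeping is in place, the $L^\infty$-estimate is precisely Theorem \ref{mt} and the upgrade to smoothness on $Y_{\mathrm{reg}}$ is standard.
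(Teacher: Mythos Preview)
Your proposal is essentially the paper's argument: equivariant resolution, passage to the compact link $\tilde S$ as a homologically orientable transverse K\"ahler manifold, reduction of the Ricci-flat cone equation to a degenerate transverse Monge--Amp\`ere equation whose right-hand side is $L^{1+\delta}$ by the klt condition, perturbation $\omega_t=\tilde\omega+t\chi$ together with a smoothing of $F$, uniform $L^\infty$ bound from Theorem~\ref{mt} (here $\tilde\omega\geq 0$, so $\Nu_t\equiv 0$), and then local Laplacian and Evans--Krylov estimates on $\pi^{-1}(Y_{\mathrm{reg}})$. One technical correction: the decomposition of the potential must be multiplicative, $r^2=f_\xi e^{\vp}$ for a fixed smooth conical reference $f_\xi$, so that $\vp=\log(r^2/f_\xi)$ is invariant under both $\xi$ and $-J\xi$ and descends to a basic function on $\tilde S$; with your additive ansatz $\pi^\ast(r^2)=r_0^2+\vp$ the function $\vp$ is homogeneous of degree two under the $\RR_+$-action and does not reduce to the link, so the transverse equation you write would not arise from it.
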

 	That the corollary follows from the main theorem is rather standard (cf. \cite{EGZ2}). While we obtained the $L^\infty$-estimate for \textit{transverse nef} classes in Theorem \ref{mt}, to prove the corollary, we only need an estimate for non-negative transverse class.  In the context of transverse K\"ahler manifold, as noted above, this appeared in \cite{B20} for the toric case, and recently in \cite{TTR} for the general case. But as noted above, the main difference with \cite{TTR} is that our proof is a completely PDE proof. We outline the general argument in Section \ref{potential} for the convenience of the reader.  
	
	This note is organized in the following way: Section \ref{pre} provides an introduction to essential concepts in transverse K\"ahler geometry and affine $\QQ$-Gorenstein varieties. Moving forward to Section \ref{infty estimate}, we prove our main theorem, denoted as Theorem \ref{mt}. And in Section \ref{potential}, we establish the smoothness of the conical Calabi-Yau potential on the regular locus of the $\QQ$-Gorenstein affine variety, as articulated in Corollary \ref{main thm}.

\section{Preliminaries}\label{pre}

\subsection{Preliminaries on Transverse K\"ahler Geometry:}
	In this section, we introduce some definitions and preliminary results used in the note. Our introduction is very minimal, and we refer the reader to the monograph \cite{BG08}	by Boyer and Galicki for more on Sasaki manifolds and transverse K\"ahler manifolds and the book by Molino \cite{mo} for more on foliations.
\begin{defn}\label{foliation}
\begin{enumerate}
	\item	A dimension-$k$ \textit{foliation of a manifold} $S$ is described by the following data:
\begin{itemize}
	\item an open cover $\{U_i\}$, $S=\underset{i}{\cup}U_i.$
	\item a smooth manifold $T$ of dimension $dim(S)-k$ and smooth submersions $\ga_i:U_i\to T$
	\item and if $U_i\cap U_j\neq \emptyset$, then there exist a local diffeomorphism $\pi_{ij}:\ga_i(U_i)\to \ga_j(U_j)$ such that $\pi_{ij}\circ\ga_i=\ga_j. $
\end{itemize}
\item If $T$ is a complex manifold and the local diffeomorphisms $\pi_{ij}$ are holomorphic, then we say that $S$ is a transverse holomorphic manifold.
\end{enumerate}
\end{defn}	
	And a foliation defines a partition of the manifold $S$ and it is denoted by $\sF.$
\begin{defn}
	A $p$-form $\al$ on a foliated manifold $(S,\sF)$ is said to be a \textit{basic form} if $$i_v\al=0 \text{ and }\sL_v\al=0, \text{ for all } v\in T\sF.$$
\end{defn}
	If the manifold $T$, mentioned in the Definition \ref{foliation} is a complex manifold, then there exist an endomorphism $\Phi$ on $TS$ such that $$ \sL_v\Phi=0, \text{ for all } v\in T\sF, \Phi|_{T\sF}\equiv 0\text{ and } \Phi^2=-id \text{ on } TS/\sF. $$
\begin{defn}\label{tr kahler}
	Let $(S,\sF)$ be a foliated manifold, we say $S$ is a \textit{transverse K\"ahler manifold} if $(S,\sF)$ is a transverse holomorphic manifold and there exist a basic $(1,1)$ form $\omega$ which is positive definite on $(TS/\sF)^{1,0}.$
	And we call $\omega$ a transverse K\"ahler metric.
\end{defn}
	From the definition we can see that the exterior differential operator $d$ maps basic form to a basic form, so we can define the cohomology of basic forms. Let $\Omega^k_B(S,\RR)$ be the set of all basic forms which is a subset of $\Omega^k(S,\RR)$ and we define $$H^k_B(S,\RR):=\frac{ker(d:\Omega^k_B(S,\RR)\to\Omega^{k+1}_B(S,\RR))}{im(d:\Omega^k_B(S,\RR)\to\Omega^{k+1}_B(S,\RR))}.$$
\begin{defn}
	Let $S$ be a compact manifold of dimension $m+k$ with a dimension-$k$ foliation $\sF$, then we say that this foliation $(S,\sF)$ is \textit{homologically orientable} if $H^{m}_B(S,\RR)\neq\emptyset.$
\end{defn}
	A homologically orientable transverse K\"ahler manifold has almost all the properties satisfied by the K\"ahler manifolds. For example, the $\d\dbar$-lemma, Lefschetz decompostion, Hodge theorem and so on. See \cite{aziz,BG08,Masa,noz}, for further discussion on this.
\begin{rem}
	Not all transverse K\"ahler manifolds are homologically orientable. A counter example was given by Carri\`ere in \cite{car}, see \cite[Appendix A]{mo} also.
\end{rem}
\begin{defn}
	Let $(S,\sF,\omega)$ be a compact transverse K\"ahler manifold with $\omega$ a basic transverse K\"ahler form. Then
\begin{enumerate}
	\item $[\al]\in H^{1,1}_B(S,\RR)$ is \textit{basic K\"ahler} if it contains a smooth representative which is transverse K\"ahler, that is, there exist a smooth basic function $\vp$ such that $\al+\ddbar \vp$ is a transverse K\"ahler metric.
	\item $[\al]\in H^{1,1}_B(S,\RR)$ is said to be \textit{basic nef} if for every $\vep>0$ the class $[\al]+\vep[\omega]$ contains a smooth transverse K\"ahler metric.
\end{enumerate}
\end{defn}
\begin{defn}
	Let $(S,g)$ be a $2n+1$-dimension Riemannian manifold. We say $S$ is a \textit{Sasaki manifold} if the Riemannian cone $(C(S),\bar{g})=(S\times\RR_+,dt^2+t^2g)$ is a K\"ahler manifold.
\end{defn}
	We can identify $S$ in $C(S)$ by $\{1\}\times S$. Let $J$ be the complex structure of $C(S)$ and define a vector field $\xi$ and the $1$-form $\eta$ by $$\xi=J\left(t\frac{\d}{\d t}\right),~\eta(\cdot)=\bar{g}(\xi,\cdot).$$
The restriction of $\xi$ and $\eta$ to $S$ satisfies the following properties:
\begin{itemize}
	\item $\xi$ is a Killing vector field
	\item the integral curve of $\xi$ is a geodesic
	\item $\eta(\xi)=1 $,$d\eta(\xi,v)=0$, for any vector field $v$ on $S$ and $d\eta$ is a transverse K\"ahler form.
\end{itemize}
	The vector field $\xi$ defines a foliation of the manifold $S$ called as the \textit{Reeb foliation} we denote it by $\sF_\xi$.
	Let $\theta$ be any basic $(1,1)$ form, then
\begin{gather*}
	\Nu_\theta(x):=\sup\{u(x):u\in PSH(S,\xi,\theta),u\leq 0\} \text{ and } \\ \hat{\Nu}_\theta(x):=\sup\{u(x):u\in PSH(S,\xi,\theta)\cap C^\infty(S),u\leq 0\}
\end{gather*}
	are the envelope of the potentials. And we define $$\Nu_\theta^f:=\sup\{u(x):u\in PSH(S,\xi,\theta),u\leq f\},$$ for any basic function $f.$ And by \cite[Proposition 3.17]{HL21}, $\Nu_\theta,\hat{\Nu}_\theta,\Nu_\theta^f\in PSH(S,\xi,\theta).$
	
	The following proposition and theorem are essentially transverse K\"ahler versions of \cite[Proposition 2.3]{B19} and \cite[Theorem 3.3]{B19} respectively, which provides the approximations of the envelopes that will be needed in the course of proof of our main theorem, and its corollary.
\begin{prop}
	\label{envelope approximation}Let $[\theta]$ and $[\omega]$ be any two transverse K\"ahler class on a transversely  K\"ahler and homologically orientable manifold $(\tilde{S},\tilde{\sF}_\xi,g,\xi)$. Then the smooth solution $u_\be$ of the equation \[\label{be cma} (\theta+\ddbar u_\be)^n\wedge\eta=e^{\be u_\be}\omega^n\wedge\eta \] satisfies $$ \|u_\be-\hat{\Nu}_\theta\|_{C^0(\tilde{S})}\leq \frac{A\log \be}{\be}, $$ where the constant $A$ depends on an upper bound on $\frac{\theta^n\wedge\eta}{\omega^n\wedge\eta}.$
\end{prop}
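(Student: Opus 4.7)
The plan is to prove the sharp one-sided bounds $u_\beta \leq \hat\Nu_\theta + (\log A)/\beta$ and $u_\beta \geq \hat\Nu_\theta - (A\log\beta)/\beta$ separately, both via the transverse maximum principle applied to the Monge-Amp\`ere equation \eqref{be cma}. Throughout, $A$ will denote a uniform constant depending on an upper bound and a positive lower bound for the smooth positive ratio $\theta^n\wedge\eta/(\omega^n\wedge\eta)$ (both bounds are finite since $\theta,\omega$ are transverse K\"ahler on a compact manifold).

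For the upper bound, I would evaluate the equation at a point $x_{\max}$ where $u_\beta$ attains its maximum. At $x_{\max}$ one has $\ddbarb u_\beta \leq 0$ on the transverse bundle, hence $0 \leq \theta + \ddbarb u_\beta \leq \theta$; taking $n$-th transverse powers and wedging with $\eta$ gives $(\theta+\ddbarb u_\beta)^n\wedge\eta \leq \theta^n\wedge\eta \leq A\,\omega^n\wedge\eta$. Combined with the PDE this forces $\max u_\beta \leq (\log A)/\beta$. Consequently $u_\beta - (\log A)/\beta$ is a smooth, basic, $\theta$-psh function which is $\leq 0$, i.e.\ an admissible competitor in the sup defining $\hat\Nu_\theta$, yielding $u_\beta \leq \hat\Nu_\theta + (\log A)/\beta$.

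For the lower bound it suffices to show that every smooth basic $\theta$-psh function $v \leq 0$ satisfies $v \leq u_\beta + (A\log\beta)/\beta$, and then take the sup over $v$. The naive strategy---inspecting a minimum of $u_\beta - v$ and comparing $(\theta+\ddbarb u_\beta)^n$ with $(\theta+\ddbarb v)^n$ through the PDE---fails because $(\theta + \ddbarb v)^n$ may degenerate to zero for a merely $\theta$-psh $v$. The remedy is to contract $v$ slightly: set $v_\epsilon := (1-\epsilon) v$ for small $\epsilon \in (0,1)$. Then $v_\epsilon$ is smooth, basic, and $\leq 0$, and strictly $\theta$-positive since $\theta + \ddbarb v_\epsilon = \epsilon\theta + (1-\epsilon)(\theta+\ddbarb v) \geq \epsilon\theta$, which gives $(\theta + \ddbarb v_\epsilon)^n\wedge\eta \geq \epsilon^n c_0\, \omega^n\wedge\eta$ for a uniform $c_0 > 0$. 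At a point $y_0$ realising $\min(u_\beta - v_\epsilon)$, the transverse maximum principle yields $\theta + \ddbarb u_\beta \geq \theta + \ddbarb v_\epsilon$ at $y_0$, and plugging into \eqref{be cma} gives $e^{\beta u_\beta(y_0)} \geq \epsilon^n c_0$, i.e.\ $u_\beta(y_0) \geq (n\log\epsilon + \log c_0)/\beta$. Since $v_\epsilon(y_0) \leq 0$, this lifts to the global bound $u_\beta - v_\epsilon \geq (n\log\epsilon + \log c_0)/\beta$, and using $v \leq v_\epsilon$ (a consequence of $v \leq 0$) together with the choice $\epsilon = 1/\beta$ produces the claimed $(A\log\beta)/\beta$ rate.

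The main obstacle is precisely the possible degeneration of $(\theta + \ddbarb v)^n$ for a general $\theta$-psh competitor; without it the comparison at $\min(u_\beta - v)$ would already give an $O(1/\beta)$ rate. The convex-combination trick $v_\epsilon = (1-\epsilon) v$ restores strict transverse positivity while keeping $v_\epsilon$ basic, smooth, and a valid competitor, at the cost of the extra $\log\beta$ factor in the final estimate.
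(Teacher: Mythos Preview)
Your argument is correct and follows the same overall strategy as the paper: the upper bound via the maximum point of $u_\beta$ is identical, and the lower bound proceeds by perturbing an arbitrary smooth competitor to make it strictly $\theta$-positive and then applying the comparison/maximum principle.

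The only difference is in the perturbation used. The paper fixes an auxiliary smooth strictly $\theta$-psh function $w$ (available because $[\theta]$ is transverse K\"ahler) and compares $u_\beta$ with $(1-\beta^{-1})v + \beta^{-1}w - \delta$, whereas you simply use $(1-\beta^{-1})v$ and extract the strict positivity from the $\beta^{-1}\theta$ term in the splitting $\theta+\ddbarb v_\epsilon=\epsilon\theta+(1-\epsilon)\theta_v$. Your version is a bit cleaner but requires the representative $\theta$ itself to be transverse K\"ahler (so that $\theta^n\wedge\eta\geq c_0\,\omega^n\wedge\eta$); the paper's version only needs the \emph{class} $[\theta]$ to be K\"ahler. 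In the present setting $\theta$ is indeed taken to be a transverse K\"ahler form, so both work; note, however, that your constant $A$ then genuinely depends on a positive lower bound for $\theta^n\wedge\eta/(\omega^n\wedge\eta)$ as well, which you acknowledge but which goes slightly beyond what the proposition literally states.
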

\begin{proof} 
	Let $x\in \tilde{S}$ be such that $\sup_{s\in \tilde{S}}u_\be(s)=u_\be(x).$ Then from \eqref{be cma} and the fact that $\ddbar u_\be(x) \leq0$, we get 
	$\theta^n\wedge\eta(x)\geq e^{\be u_\be(x)}\omega^n\wedge\eta(x)$, thus
	$$u_\be(s)\leq \frac{C}{\be}, \text{ where } C=\sup_{\tilde{S}}\log\left|\frac{\theta^n\wedge\eta}{\omega^n\wedge\eta}\right|<\infty. $$
	Now, since $u_\be-\frac{C}{\be}\in PSH(\tilde{S},\theta,\xi)$ by definition  $$ u_\be-\frac{C}{\be}\leq \hat{\Nu}_\theta. $$
	Now let $u\in PSH(\tilde{S},\theta,\xi)\cap C^\infty(\tilde{S})$ and $v$ be a fixed smooth basic and strictly subharmonic function on $\tilde{S}.$ Since $[\theta]$ is a transverse K\"ahler class such a choice is possible.
	And $u_{\vep,\delta}=(1-\vep)u+\vep\ v-\delta\in PSH(\tilde{S},\theta,\xi)\cap C^\infty(\tilde{S}),$ where $\vep=\be^{-1}$ and $\delta$ will be chosen later.
	Then 
\begin{align*}
	\left((1-\vep)\theta+\vep\theta+\ddbarb u_{\vep,\delta}\right)^n\wedge\eta=&\sum_{k=0}^{n}\binom{n}{k}(1-\vep)^{n-k}\vep^k\theta_u^{n-k}\theta_v^k\wedge\eta\\
	\geq&\vep^n\theta_v^n\wedge\eta\\
		\geq& e^{-\delta\be}\omega^n\wedge\eta, 
\end{align*}
	where we chose $ \delta=C'\be^{-1}\log\be$ for some large appropriate constant $C'.$
	Thus 
	$\theta_{u_{\vep,\delta}}^n\wedge\eta\geq e^{\be u_{\vep,\delta}}\omega^n\wedge\eta,$ since $u_{\vep,\delta}\leq 0.$
	We already have $\theta_{u_{\be}}^n\wedge\eta=e^{\be u_\be}\omega^n\wedge\eta$, thus by comparison principle we can get $u_{\vep,\delta}\leq u_\be $ as we see below.
\begin{cl}
	$u_{\vep,\delta}\leq u_\be.$
\end{cl}
	Suppose that $\sup_{\tilde{S}}(u_{\vep,\delta}-u_\be)=(u_{\vep,\delta}-u_\be)(x).$
	Then $\ddbarb(u_\be-u_{\vep,\delta})(x)\leq 0,$ that is $\theta_{u_{\vep,\delta}}(x)\leq \theta_{u_\be}(x)$. Thus $$\frac{\theta_{u_{\vep,\delta}}^n\wedge\eta(x)}{\theta_{u_{\be}}^n\wedge\eta(x)}\leq 1.$$ But $$ e^{\be(u_{\vep,\delta}-u_\be)(x)} \leq\frac{\theta_{u_{\vep,\delta}}^n\wedge\eta(x)}{\theta_{u_{\be}}^n\wedge\eta(x)} \leq  1 .$$ Thus $u_{\vep,\delta}-u_\be\leq 0$ on $\tilde{S}.$
	
	This implies that $\frac{\be-1}{\be}u\leq u_\be-\frac{1}{\be}v+\frac{C'\log \be}{\be}.$ Since $v\in C^\infty(\tilde{S})$, we get $u\leq \frac{\be}{\be-1}u_\be+\frac{C_1\log\be}{\be},$ for some constant $C_1.$
	
	And by definition of $\hat{\Nu}_\theta$ we have $$ u_\be-\frac{C}{\be}\leq \hat{\Nu}_\theta\leq \frac{\be}{\be-1}u_\be+\frac{C_1\log\be}{\be}. $$ Thus $$\|u_\be-\hat{\Nu}_\theta\|_{C^0(\tilde{S})}\leq \frac{A\log \be}{\be}, $$ follows from applying the inequality $u_\be\leq \frac{C}{\be}$ to the inequality  above.
\end{proof} 

\begin{thm}\label{approx}
	Let $\left(\tilde{S},\tilde{\sF}_\xi\right)$ be a transverse K\"ahler manifold which is homologically orientable and $u\in PSH(\tilde{S},\xi,\theta)$ then there exist a sequence $\{u_j\}_{\{j\in \mathbb{N}\}}$ such that $u_j\in PSH(\tilde{S},\xi,\theta)\cap C^\infty(\tilde{S})$ and $u_j\to u$ uniformly, for any basic $(1,1)$-form $\theta.$
\end{thm}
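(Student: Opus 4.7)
My strategy is to adapt Berman's regularization scheme \cite[Theorem 3.3]{B19} to the transverse K\"ahler setting, essentially by recycling the argument already used in the proof of Proposition \ref{envelope approximation}. Fix a basic transverse K\"ahler form $\chi$ on $\tilde{S}$; after replacing $\theta$ by $\theta + \vep\chi$ and letting $\vep \downarrow 0$ at the end, I may assume $\theta$ itself is transverse K\"ahler, so $u$ is automatically bounded. The approximating sequence will be $u_j := u_{\be_j}$ for some $\be_j \to \infty$, where $u_\be$ is the smooth basic solution of the transverse Monge--Amp\`ere equation
\begin{equation*}
(\theta + \ddbarb u_\be)^n \wedge \eta = e^{\be(u_\be - u)} \chi^n \wedge \eta,
\end{equation*}
obtained by first replacing $u$ with a smooth basic decreasing approximation $u^{(\vep)}$ (see below), invoking the existence results of \cite{aziz}, and then letting $\vep \downarrow 0$ to produce a genuine smooth solution.

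The heart of the matter is the two-sided estimate $\|u_\be - u\|_{C^0(\tilde{S})} = O(\be^{-1}\log\be)$, whose proof follows the exact structure of Proposition \ref{envelope approximation}. For the \emph{upper bound}, evaluate at a maximum point $x_0$ of $u_\be - u^{(\vep)}$; since $\ddbarb (u_\be - u^{(\vep)})(x_0) \leq 0$, the Monge--Amp\`ere equation forces
$u_\be(x_0) - u^{(\vep)}(x_0) \leq \be^{-1}\log \sup_{\tilde{S}} \frac{(\theta + \ddbarb u^{(\vep)})^n\wedge\eta}{\chi^n\wedge\eta}$,
and the right-hand side is controlled uniformly in $\vep$ along the smoothing. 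For the \emph{lower bound}, take any smooth $w \in PSH(\tilde{S}, \xi, \theta)$ with $w \leq u$, fix a smooth strictly basic PSH function $v$, and form $w_{\vep,\delta} = (1-\vep)w + \vep v - \delta$ with $\vep = \be^{-1}$ and $\delta = C'\be^{-1}\log\be$. The multinomial expansion of $(\theta + \ddbarb w_{\vep,\delta})^n$ yields
\begin{equation*}
(\theta + \ddbarb w_{\vep,\delta})^n \wedge \eta \geq e^{\be(w_{\vep,\delta} - u)} \chi^n \wedge \eta,
\end{equation*}
and the comparison principle, applied exactly as in the Claim inside Proposition \ref{envelope approximation}, gives $w_{\vep,\delta} \leq u_\be$. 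Optimizing over all admissible $w$ produces $u_\be \geq u - C\be^{-1}\log\be$, which combined with the upper bound is the desired uniform estimate.

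The principal obstacle is constructing the smooth basic approximants $u^{(\vep)} \searrow u$ and the smooth $w \leq u$ with $w \in PSH(\tilde{S}, \xi, \theta)$ needed to run both steps, since ordinary convolution on $\tilde{S}$ does not preserve the basic property of $\theta$-PSH functions. The remedy is to perform Demailly-type regularization on each local transversal $T$ of the foliation, on which the basic function $u$ descends as an ordinary $\theta$-PSH function; one then glues the local regularizations via the regularized maximum construction (or a partition of unity composed with the leaf projection), both of which preserve the basic $\theta$-PSH property. Once this transverse regularization procedure is in place, the remainder of the argument is a direct translation of Berman's K\"ahler proof, and the approximants $u_j = u_{\be_j}$ then converge to $u$ uniformly at rate $\be_j^{-1}\log\be_j$.
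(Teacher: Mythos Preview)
There is a genuine circularity in your lower-bound step. To conclude $u_\be \geq u - C\be^{-1}\log\be$ by ``optimizing over all admissible $w$'' you need the supremum of smooth $\theta$-PSH functions $w \leq u$ to equal $u$; but that equality is precisely the content of Theorem \ref{approx}. Your proposed remedy, a Demailly-type regularization on local transversals glued by regularized maxima, would---if it could be carried out---already produce smooth basic $\theta$-PSH functions decreasing to $u$ and thereby prove the theorem outright, rendering the Monge--Amp\`ere detour superfluous. So as written the argument either assumes what it sets out to prove, or hides the entire theorem inside an unwritten step.

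There is a second, independent gap: solving $(\theta+\ddbarb u_\be)^n\wedge\eta = e^{\be(u_\be - u)}\chi^n\wedge\eta$ with the non-smooth $u$ on the right-hand side does not yield a smooth $u_\be$. Letting $\vep\downarrow 0$ in your $u_\be^{(\vep)}$ produces at best a bounded $\theta$-PSH function, not a $C^\infty$ one, because the right-hand side $e^{-\be u}$ is only bounded and measurable.

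The paper sidesteps both issues with one device: it never puts $u$ on the right-hand side. Instead one chooses a decreasing sequence of \emph{smooth basic} (but not PSH) functions $f_j\searrow u$---this needs only that $u$ is basic and USC---and solves $(\theta+\ddbarb\vp_{\be,j})^n = e^{\be(\vp_{\be,j}-f_j)}\omega^n$. Since $f_j$ is smooth, \cite{aziz} gives genuinely smooth $\vp_{\be,j}$, and Proposition \ref{envelope approximation} (applied with obstacle $f_j$) yields $\vp_{\be,j}\to \Nu_\theta^{f_j}$ uniformly as $\be\to\infty$. The replacement for your lower bound is then the elementary squeeze $u\leq \Nu_\theta^{f_j}\leq f_j$: the left inequality holds simply because $u$ is itself a $\theta$-PSH competitor below $f_j$, so no smooth PSH approximation of $u$ from below is ever required. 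A diagonal choice $u_j:=\vp_{\be_j,j}+\ep_j$ finishes the proof.
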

\begin{proof}
	Since $u$ is upper semi-continuous basic function, it can be written as a decreasing limit of basic smooth functions $\{f_j\}_{j\in \NN}.$ And we let $\vp_j:=\hat{\Nu}_\theta^{f_j}$, then by definition $f_j\geq\vp_j\geq u.$ Moreover, for any fixed $x\in \tilde{S}$ and an $\ep>0$ we can find $j_\ep\in\NN$ such that $$u(x)\leq \vp_j(x)\leq f_j(x)\leq u(x)+\ep, \text{ for all }j\geq j_\ep.$$ Thus $\vp_j$ decreases to $u$. 
	Now, to find a sequence of basic smooth psh-functions decreases to $u$ we consider the following transverse Complex Monge-Amp\`ere equations $$(\theta+\ddbarb\vp_{\be,j})^n=e^{\be(\vp_{\be,j}-f_j)}\omega^n $$ and $\vp_{\be,j}$ be the solutions which exists by the transverse Calabi-Yau theorem(cf.\cite{aziz}). And by Proposition \ref{envelope approximation}, we can see that $\vp_{j,\be}$ converges uniformly to $\vp_j$ as $\be\to \infty.$ Hence, for an appropriate choices of $\be_j\in\NN$ and $\ep_j>0$ such that $\be_j\to \infty$ and $\ep_j\to 0$ as $j\to \infty$ and if we let $$u_j:=\vp_{j,\be_j}+\ep_j$$ satisfies the necessary conditions we expected.
\end{proof}
\begin{rem}
	By the Theorem \ref{approx} and Proposition \ref{envelope approximation}, we can see that $\Nu_\theta^f=\hat{\Nu}_\theta^f$.
\end{rem}	
\subsection{Affine $\QQ$-Gorenstein variety:}
	The cone over a Sasaki manifold is an affine variety, we will see in this section that a link of an affine $\QQ$-Gorenstein variety can also given the structure of a Sasaki manifold. Let $Y$ be any affine $\QQ$-Gorenstein variety with non-isolated singularity and a good torus $\mathbb{T}$ action, and we let $r$ be a K\"ahler potenital on $Y.$ Then $$\eta=J(d\log r) \text{ and } d\eta=\ddbar\log r$$ defines a $\mathbb{T}$-invariant $1$-form and $2$-form respectively such that $\eta(\xi)=1$ and we define the link $S$ of $Y$ by $$S:=(Y\smallsetminus\{y_0\})/\RR_+,$$ where $y_0$ is the fixed point of $\TT$ and the action of $\RR_+$ is generated by $-J\xi$, and $J$ is the complex structure of $Y.$
	
	Then the restriction of $d\eta$ to the link $S$ is transverse K\"ahler form on $S.$ If we define the Riemannian metric $g$ on $S$ by $$g(\cdot,\cdot)=\eta\otimes\eta(\cdot,\cdot)+\frac{1}{2}d\eta(\cdot,\Phi\cdot),$$ then 
	$(S_{reg},g,\xi,\eta)$ is a Sasaki manifold, where $S_{reg}=S\cap Y_{reg}.$

	As we deal with the affine $\QQ$-Gorenstein varieties with the non-isolated singularities, to apply the PDE methods efficiently it is beneficial to work on smooth varieties. So, we take the resolution and try to convert the problem to the resolution as we will see in the upcoming section and in the subsection \ref{cone to link}.
\subsubsection{Resolution of $Y$:}	
	By Lemma 2.5 of \cite{CS18} we have an equivariant embedding of $(Y,\mathbb{T})$ in $\CC^N$ such that $Y$ does not contained entirely in a hyperplane. Denote by $\overline{Y}$ the Zariski closure of $Y$ in $\PP^N$ under the standard embedding of $\CC^N$ in $\PP^N.$ Then by \cite[Theorem 36]{kol}, there exist a $\mathbb{T}$-equivariant resolution $\pi:X\to \overline{Y}$ such that the inverse image of the singular locus of $\overline{Y}$ coincides with the support of an effective divisor $E$ on $X$. Then we let $$\tilde{Y}:=\pi^{-1}(Y)\subset X,$$ be the resolution of $Y.$
	
	So, the action of $\mathbb{T}$ on $Y$ can be extended to $\tilde{Y}$ as well. Since $\xi$ and $-J\xi$ are vector fields on $Y_{reg}$, it can be pulled back as vector fields to $\tilde{Y}$ which we again denote by  $\xi$ and $-J\xi$ respectively. And we let $$\mathcal{U}:=\pi^{-1}(Y_{reg}).$$
	
\begin{lem}[Lemma 4.12 of \cite{B19}]\label{metric on desing}
	There exist a $\mathbb{T}$-invariant K\"ahler form $\hat{\chi}$ on $\tilde{Y}$ and a smooth $\mathbb{T}$-invariant function $\rho$ on $\mathcal{U}$ such that $\rho\to -\infty$ at $\d\mathcal{U}$ and $$\ddbar\rho=\hat{\chi}$$ on $\mathcal{U}.$
\end{lem}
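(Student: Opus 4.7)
The plan is to construct $\hat\chi$ globally on $X$ as the sum of the pullback of a Fubini--Study form and a small multiple of a curvature form supplying positivity in the exceptional directions; the potential $\rho$ on $\mathcal{U}$ is then read off essentially for free.

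First, using the $\mathbb{T}$-equivariant embedding $\overline{Y}\hookrightarrow\PP^N$ from \cite{CS18}, I would average the Fubini--Study form over the maximal compact subtorus so that it is $\mathbb{T}$-invariant, and pull it back to obtain a closed, semi-positive, $\mathbb{T}$-invariant $(1,1)$-form $\pi^*\omega_{FS}$ on $X$. On the affine chart $\CC^N\subset\PP^N$ the standard potential is $\log(1+|z|^2)$, so on $\tilde{Y}=X\setminus D_\infty$ (with $D_\infty=\pi^{-1}(\overline{Y}\cap H_\infty)$ the divisor at infinity) one obtains a global smooth $\mathbb{T}$-invariant potential
\[
\Phi_0:=\pi^*\log(1+|z|^2), \qquad \ddbar\Phi_0=\pi^*\omega_{FS}.
\]

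Next, because $\pi$ is a projective birational morphism obtained via equivariant blowups of smooth centers, one can arrange that $-E$ is $\pi$-relatively ample. I would then choose, by averaging, a $\mathbb{T}$-invariant Hermitian metric $h$ on $\mathcal{O}(E)$ whose curvature $\Theta(h)$ is negative-definite on every fiber of $\pi$, and let $s_E$ denote the defining section of $E$. Then $\log|s_E|^2_h\to-\infty$ along $E$, while $\ddbar\log|s_E|^2_h=-\Theta(h)$ on $X\setminus E$. Setting
\[
\hat\chi := \pi^*\omega_{FS}-\epsilon\,\Theta(h), \qquad
\rho := \Phi_0+\epsilon\log|s_E|^2_h,
\]
for a small $\epsilon>0$ to be fixed, both objects are manifestly $\mathbb{T}$-invariant, the identity $\ddbar\rho=\hat\chi$ on $\mathcal{U}=\tilde{Y}\setminus E$ is immediate, and $\rho\to-\infty$ at $\d\mathcal{U}=E\cap\tilde{Y}$.

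The main, and really the only, obstacle is to verify that $\hat\chi$ is strictly positive, hence K\"ahler, on all of $X$ for $\epsilon$ sufficiently small. I would argue pointwise: at $p\in X$ decompose $T_p^{1,0}X=V\oplus H$ with $V=\ker d\pi|_p$. Then $d\pi|_H$ is injective, so $\pi^*\omega_{FS}|_H>0$; and the positive semi-definiteness of $\pi^*\omega_{FS}$ together with its vanishing on $V$ forces all Hermitian pairings between $V$ and $H$ to vanish. Meanwhile $-\Theta(h)|_V>0$ by the choice of $h$. A Schur-complement estimate, combined with the compactness of $X$ providing uniform constants, then shows $\pi^*\omega_{FS}-\epsilon\,\Theta(h)>0$ at every point of $X$ for all sufficiently small $\epsilon$, completing the construction.
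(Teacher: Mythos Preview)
Your construction coincides with the paper's: both realize $\hat\chi$ as the curvature of $\pi^*\mathcal{O}(1)\otimes\mathcal{O}(-\epsilon E)$ and obtain $\rho$ from the obvious potentials. The paper differs only in how positivity is justified: rather than arguing directly, it invokes Kodaira's lemma to conclude that the $\QQ$-line bundle $A=\pi^*\mathcal{O}(1)-\epsilon[E]$ is ample, then simply picks a $\mathbb{T}$-invariant metric $h_A$ on $A$ with positive curvature, and defines $h_E$ afterwards so that $h_A=h_1\otimes(\epsilon h_E)^{-1}$; this sidesteps your pointwise computation entirely.

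One caution on your direct argument: the splitting $T_p^{1,0}X=V\oplus H$ with $V=\ker d\pi_p$ is discontinuous in $p$ (the rank of $V$ jumps along $E$), so ``compactness of $X$ providing uniform constants'' for the Schur complement is not quite right as stated --- at generic points $V=0$, $H=T_pX$, and the smallest eigenvalue of $\pi^*\omega_{FS}$ degenerates as $p\to E$. The argument that actually works is compactness on the unit sphere bundle: if $\hat\chi_{\epsilon_j}(v_j,\bar v_j)\leq 0$ along a sequence with $\epsilon_j\to 0$ and $|v_j|=1$, pass to a limit $(p,v)$; then $\pi^*\omega_{FS}(v,\bar v)=0$, so $v\in\ker d\pi_p$, whence $-\Theta(h)(v,\bar v)>0$ and hence $\hat\chi_{\epsilon_j}(v_j,\bar v_j)>0$ for large $j$, a contradiction. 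With this fix your approach is complete; the paper's appeal to ampleness avoids the issue altogether.
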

\begin{proof}	
	Since $\pi:X\to \overline{Y}$ is a $\mathbb{T}$-equivariant resolution and $\pi$ is relatively ample, by Kodaira's lemma there exists a positive number $\vep\in\QQ$ such that the $\QQ$-line bundle $$A:=\pi^*\mathcal{O}(1)-\vep[E]$$ over $X$ is ample. And let $\tilde{Y}:=\pi^{-1}(Y)$.
	
	Now fix a $\mathbb{T}$-invariant metric $h_A$ on $A$ whose curvature form defines a K\"ahler metric $\hat{\chi}$ on $X$. Since $\pi^*\mathcal{O}(1)=A+\vep[E]$ and $\mathcal{O}(1)$ is a trivial line bundle on the open set $Y\subset \PP^N$, we let $h_1$ a hermitian metric on $\pi^*\mathcal{O}(1)$ and $s_1$ a trivializing section of $\mathcal{O}(1)\to Y$. And $h_E$ be the hermitian metric on the line bundle $[E]$ such that $h_A=h_1\otimes (\vep h_E)^{-1}$ with $s_E$ be the section of $[E]$ which defines the divisor $E$ then we define $$ \rho=-\log\left(\|\pi^*(s_1)\|_{h_1}^2e^{-\vep\log\|s_E\|^2_{h_E}}\right) $$ which satisfies the required properties of the lemma.
\end{proof}
	We have a K\"ahler manifold $(\tilde{Y},\hat{\chi})$ such that $\hat{\chi}$ is $\mathbb{T}$-invariant. Also, we can think of $\tilde{Y}\smallsetminus\pi^{-1}(y_0)$ as a transverse K\"ahler manifold with the foliation generated by two commuting vector fields $\xi$ and $J\xi$ on $\tilde{Y}$. And we denote this foliation on $\tilde{Y}\smallsetminus\pi^{-1}(y_0)$ by $\sF.$
	
	Next, we state a theorem that provides us with a transverse K\"ahler form on $(\tilde{Y}\smallsetminus\pi^{-1}(y_0),\sF).$
	This result is an application of the previous lemma, proven in \cite[Proposition 4.3]{B20}. So, we skip the proof and an interested reader can refer \cite[Propostion 4.3]{B20} or \cite[Lemma 3.3]{TTR}.
\begin{thm}\label{tr metric on sing}
	There exists a basic transverse K\"ahler metric $\omega_b$ on $\tilde{Y}\smallsetminus\pi^{-1}(y_0)$ and an $\xi$-equivariant function $\Phi_b$ on $\tilde{Y}\smallsetminus\pi^{-1}(y_0)$(i.e., $\sL_\xi\Phi_b$=0 and $\sL_{-J\xi}\Phi_b=2$ ) such that $$\ddbar\Phi_b=\ddbarb\Phi_b=\omega_b$$
\end{thm}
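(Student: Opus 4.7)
The strategy is to modify the K\"ahler potential $\rho$ of Lemma \ref{metric on desing} into a potential with the prescribed equivariance under $\xi$ and $-J\xi$, while retaining smoothness and strict transverse plurisubharmonicity. Averaging the Hermitian metrics $h_1$ and $h_E$ in the proof of Lemma \ref{metric on desing} over the compact torus $\TT$ lets us assume that $\rho$ and $\hat\chi$ are $\TT$-invariant, so in particular $\mathcal{L}_\xi\rho = 0$.

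The natural continuous candidate for the $(-J\xi)$-equivariant potential comes from the equivariant embedding $Y\hookrightarrow\CC^N$ of \cite[Lemma 2.5]{CS18}. Writing $a_1,\dots,a_N > 0$ for the weights of the $\xi$-action on the coordinates, the function
\[
\Phi_b^0 := \log\Bigl(\sum_{i=1}^N |z_i|^{2/a_i}\Bigr),
\]
restricted to $Y$ and pulled back to $\tilde Y$, is continuous and $\TT$-invariant, and a direct computation gives $\mathcal{L}_\xi\Phi_b^0 = 0$ and $\mathcal{L}_{-J\xi}\Phi_b^0 = 2$; it is smooth on $\mathcal{U}$ but may fail to be smooth (or transversally strictly plurisubharmonic) across the exceptional divisor $E\setminus\pi^{-1}(y_0)$ and along certain coordinate loci.

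To repair this defect, I would add an $\epsilon$-multiple of a smooth, $(-J\xi)$-invariant function $\widetilde\rho$ built from $\rho$. Since $-J\xi$ generates a free and proper $\RR_{>0}$-action on $\tilde Y\setminus\pi^{-1}(y_0)$ (the unique attracting fixed point of the $\RR_{>0}$-scaling on $Y$ being $y_0$), the quotient map $q : \tilde Y\setminus\pi^{-1}(y_0) \to B$ is a principal $\RR_{>0}$-bundle, and by the contractibility of $\RR_{>0}$ it admits a smooth section $\sigma : B \to \tilde Y\setminus\pi^{-1}(y_0)$. Setting $\widetilde\rho := \rho\circ\sigma\circ q$, one checks that $\widetilde\rho$ is smooth, $\mathcal{L}_\xi$-invariant, and $\mathcal{L}_{-J\xi}$-invariant. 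Then
\[
\Phi_b := \Phi_b^0 + \epsilon\,\widetilde\rho
\]
inherits $\mathcal{L}_\xi\Phi_b = 0$ and $\mathcal{L}_{-J\xi}\Phi_b = 2$ exactly from $\Phi_b^0$.

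The main obstacle is to show that, for an appropriate choice of $\sigma$ and sufficiently small $\epsilon > 0$, the form $\omega_b := \ddbar\Phi_b$ is smooth and strictly positive in the transverse directions on all of $\tilde Y\setminus\pi^{-1}(y_0)$, including where $\Phi_b^0$ is only continuous: the smooth, transversally strictly positive contribution $\epsilon\,\ddbar\widetilde\rho$ (coming from the transverse part of the $\TT$-invariant $\hat\chi$) must dominate the non-smoothness of $\ddbar\Phi_b^0$. This is precisely the analysis carried out in \cite[Proposition 4.3]{B20} (toric case) and \cite[Lemma 3.3]{TTR} (general case), and I plan to invoke those constructions directly; a regularised-maximum combination of $\Phi_b^0$ with an auxiliary smooth potential may also be needed to interpolate across the locus where $\Phi_b^0$ fails to be smooth. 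Once smoothness and transverse positivity of $\omega_b$ are in place, basicness of $\omega_b$ for the foliation $\sF$ generated by $\xi$ and $J\xi$ is immediate from $\mathcal{L}_\xi\Phi_b = 0$ together with $\mathcal{L}_{-J\xi}\Phi_b$ being a real constant, via the standard Cartan identities applied to $\sqrt{-1}\partial\dbar\Phi_b = -\tfrac{1}{2}\,dJ^*d\Phi_b$.
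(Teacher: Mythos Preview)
The paper does not actually give a proof of this theorem: it states the result and refers the reader to \cite[Proposition 4.3]{B20} and \cite[Lemma 3.3]{TTR}. Your proposal ultimately lands at the same two references for the decisive step, so at that level you are aligned with the paper.

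That said, the intermediate construction you sketch has a genuine gap. The function $\widetilde\rho := \rho\circ\sigma\circ q$ is obtained by precomposing $\rho$ with the \emph{non-holomorphic} map $\sigma\circ q$ (a smooth section of an $\RR_{>0}$-bundle has no reason to be holomorphic, and generically is not). Plurisubharmonicity is not preserved under non-holomorphic precomposition, so there is no reason for $\ddbar\widetilde\rho$ to be transversally semipositive, let alone strictly positive; your parenthetical identification of $\ddbar\widetilde\rho$ with ``the transverse part of the $\TT$-invariant $\hat\chi$'' is simply not correct. Consequently the argument ``$\epsilon\,\ddbar\widetilde\rho$ dominates the non-smoothness of $\ddbar\Phi_b^0$'' has no force: you are adding a form of indefinite sign. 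A separate issue is that the candidate $\Phi_b^0=\log\bigl(\sum_i|z_i|^{2/a_i}\bigr)$ fails to be smooth along each coordinate hyperplane $\{z_i=0\}\cap Y$ whenever $1/a_i\notin\ZZ_{>0}$, which for a general Reeb field is the generic situation; this is a codimension-one defect on $Y_{reg}$ itself, not merely along the exceptional set, so it cannot be repaired by a perturbation supported near $E$.

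In short, your sketch does not produce a smooth transversally strictly psh $\Phi_b$, and the repair you propose does not work as stated. Since you already plan to invoke \cite[Proposition 4.3]{B20} and \cite[Lemma 3.3]{TTR}, the honest route is to do exactly what the paper does: cite those constructions directly rather than front-loading an ad hoc $\widetilde\rho$ that would have to be discarded anyway.
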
	
\begin{rem}
	The properties of $\Phi_b$ mentioned in the above theorem is crucial while derive the second order estimates(cf. Lemma \ref{higher order}), where $\Phi_b$ is used as a barrier function.
\end{rem}
	So, we have a resolution $\tilde{Y}$ of the variety $Y$ and a 
	basic transverse K\"ahler form $\omega_b$ on $\tilde{Y}\smallsetminus\pi^{-1}(y_0).$ Now we look at the structure of the link of $\tilde{Y}\smallsetminus\pi^{-1}(y_0)$. We denote by $$ \tilde{S}:=\left(\tilde{Y}\smallsetminus \pi^{-1}\{y_0\}\right)/\mathbb{R}_+ $$ where the action of $\mathbb{R}_+$ on $\tilde{Y}\smallsetminus \pi^{-1}\{y_0\}$ is by the flow of $-J\xi$.
		
	We conclude this section by pointing out that set  $\tilde{S}:=(\tilde{Y}\smallsetminus\pi^{-1}\{y_0\})/\RR_+$ is actually a transverse K\"ahler manifold.	
	Since, $\pi$ is a holomorphic and a $\mathbb{T}$-equivariant map, $d\pi(\xi_p)=\xi_{\pi(p)},$ for every $p\in \tilde{Y}$, thus $\tilde{S}=\pi^{-1}(S)$. And hence, $\tilde{S}$ is a transverse holomorphic manifold with the foliation generated by the vector field $\xi.$ And by \cite[Lemma 4.3]{B20}, the form defined on $\tilde{S}$ by $$\chi=\omega_b|_{\tilde{S}},$$ gives us a transverse K\"ahler form on $\tilde{S}.$

	Moerover, $(\tilde{S},\tilde{\sF}_\xi)$ is homologically orientable as we will see later(see, Theorem \ref{h orientable}).
	Also, $\pi$ induces a $CR$-map which preserves the transverse holomorphic structure, which we again denote by $\pi$, $$\pi:\tilde{S}\to S$$ with $\pi|_{\tilde{S}_{reg}}:\tilde{S}_{reg}\to S_{reg}$ is a transverse bi-holomorphic map, where $\tilde{S}_{reg}:=\tilde{S}\cap\mathcal{U}$. Thus, the pull-back of basic $(p,q)$-forms in $S_{reg}$ using $\pi$ will be basic $(p,q)$-forms in $\tilde{S}.$
\section{$L^\infty$ estimate}\label{infty estimate}
	In this section, we establish the proof of our main theorem, Theorem \ref{mt}. As in \cite{GPT21,GPTW21}, we also use the particular auxiliary transverse Monge-Ampere equation to prove the necessary estimate. And the method of proof largely aligns with \cite{GPT21,GPTW21}, with necessary modifications to the transverse K\"ahler manifold setting. Before we proceed to prove it, we need some lemmas. The next lemma is basically due to De Giorgi, and we state it without proof. However, an interested reader can refer to \cite[Lemma 2]{GPT21} for the proof.
	
\begin{lem}\label{degiorgi lemma}
	Let $f:\RR_+\to\RR_+$ be a decreasing right continuous function with $\lim_{s\to\infty}f(s)=0.$ Assume that $rf(s+r)\leq B_0f(s)^{1+\delta_0}$ for some constant $B_0>0$ and all $s>0$ and $r\in [0,1]$. Then there exists a positive constant $s_\infty:=s_\infty(\delta_0,B_0,f)$ such that $f(s)=0,$ for all $s\geq s_\infty.$
\end{lem}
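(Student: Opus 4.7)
The strategy is the classical De Giorgi iteration: build an increasing sequence $s_0<s_1<s_2<\cdots$ whose limit $s_\infty$ is finite, and along which $f(s_k)$ decays geometrically to $0$. Right continuity together with monotonicity then forces $f$ to vanish on $[s_\infty,\infty)$.

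Concretely, since $f(s)\to 0$ at infinity, I would first choose $s_0>0$ so small as to guarantee that
\[
M_0:=f(s_0)\le (2B_0)^{-1/\delta_0}.
\]
This initial smallness is the only delicate ingredient; its role is to ensure that the increments $r_k$ produced below actually lie in $[0,1]$, so that the hypothesis of the lemma may be applied at each step. Define inductively
\[
r_k:=2B_0\,f(s_k)^{\delta_0},\qquad s_{k+1}:=s_k+r_k.
\]
The choice of $M_0$ gives $r_0\le 1$, and an easy induction (carried out below) shows $r_k\le 1$ for all $k$.

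Applying the hypothesis at $s=s_k$ with $r=r_k$,
\[
r_k\,f(s_{k+1})\le B_0\,f(s_k)^{1+\delta_0}=\tfrac{r_k}{2}f(s_k),
\]
so $f(s_{k+1})\le \tfrac12 f(s_k)$. By induction $f(s_k)\le 2^{-k}M_0$, and consequently $r_k\le 2B_0 M_0^{\delta_0}2^{-k\delta_0}$, a convergent geometric series. Thus
\[
s_\infty:=s_0+\sum_{k=0}^\infty r_k<\infty,\qquad s_k\uparrow s_\infty,\qquad f(s_k)\to 0.
\]

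To finish, for any $s>s_\infty$ monotonicity gives $f(s)\le f(s_k)$ for every $k$, hence $f(s)=0$. Right continuity at $s_\infty$ then yields $f(s_\infty)=\lim_{s\downarrow s_\infty}f(s)=0$, so $f\equiv 0$ on $[s_\infty,\infty)$, with $s_\infty$ depending only on $\delta_0$, $B_0$, and the rate at which $f$ decays at infinity (through the choice of $s_0$). The only real obstacle is the bookkeeping needed to keep $r_k\le 1$ throughout the iteration; choosing $M_0$ small at the outset is what makes this automatic.
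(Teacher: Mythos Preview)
The paper does not actually prove this lemma; it states it without proof and refers the reader to \cite[Lemma~2]{GPT21}. Your argument is the standard De Giorgi iteration, which is precisely the method used in that reference, so there is no substantive difference to discuss.

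One small slip to correct: you write ``choose $s_0>0$ so \emph{small} as to guarantee $f(s_0)\le (2B_0)^{-1/\delta_0}$''. Since $f$ is decreasing with $f(s)\to 0$ as $s\to\infty$, you must choose $s_0$ \emph{large}, not small, to make $f(s_0)$ small. Apart from this wording error, the argument is fine: the induction simultaneously yields $f(s_k)\le 2^{-k}M_0$ and $r_k\le 1$, the geometric bound $r_k\le 2B_0 M_0^{\delta_0}2^{-k\delta_0}$ makes $\sum r_k$ finite, and monotonicity plus right continuity finish the job. (If some $r_k=0$ then already $f(s_k)=0$ and you are done at that step; otherwise dividing by $r_k$ in the key inequality is legitimate.)
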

	Now let $u_{t,\be}$ be the solution of the transverse complex Monge-Amp\`ere equation $$(\omega_t+\ddbarb u_{t,\be})^n\wedge\eta=e^{\be u_{t,\be}}\chi^n\wedge\eta,$$ where the existence of $u_{t,\be}$ is given by transverse Calabi-Yau theorem(cf. \cite{aziz}). By Proposition \ref{envelope approximation} and Theorem \ref{approx}, $u_{t,\be}\to \Nu_t$ as $\be\to \infty$, where $\Nu_t:=\Nu_{\omega_t}$, this fact will be needed in the proof of the following lemma.
\begin{lem}
	\label{lemma nef} There are positive constants $ C=C(n,\omega,\chi) $ and $ \be:=\be(n,\omega,\chi,\xi) $ such that for any $ s>0 $ \[ \int_{\Omega_s}exp\left\{\be\left(\frac{-\vp_t+\Nu_t-s}{A_s^{1/(n+1)}}\right)^{\frac{n+1}{n}}\right\}\chi^n\wedge \eta \leq C exp(CE_t), \]
	where $ A_s=\int_{\Omega_s}(-\vp_t+\Nu_t-s)e^F\chi^n\wedge\eta $ is the energy of $ (\vp_t-\Nu_t+s) $ on the sub-level set $ \Omega_s:=\{\vp_t-\Nu_t\le -s\} $ and $E_t=\int_{\tilde{S}}(-\vp_t+\Nu_t)e^F\chi^n\wedge\eta.$
\end{lem}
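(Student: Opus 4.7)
The plan is to transcribe the auxiliary Monge-Amp\`ere argument of Guo-Phong-Tong \cite{GPT21,GPTW21} to the foliated setting, using Aziz's transverse Calabi-Yau theorem \cite{aziz} in place of Yau's. Set $w := -\vp_t + \Nu_t$, so that $\Omega_s = \{w \geq s\}$ and $(w - s) \geq 0$ on it. Because $\Nu_t$ is only continuous, I would first carry out all manipulations with $\Nu_t$ replaced by its smooth Monge-Amp\`ere approximants $u_{t,\be}$ from Proposition \ref{envelope approximation}, and pass to the limit $\be \to \infty$ at the end.

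For each $\tau > 0$, let $\psi_\tau$ be the smooth basic solution, normalized by $\sup \psi_\tau = 0$, of the auxiliary transverse Monge-Amp\`ere equation
\[
(\chi + \ddbarb \psi_\tau)^n \wedge \eta \;=\; c_\tau \bigl((w - s)_+ + \tau\bigr) e^F \, \chi^n \wedge \eta,
\]
where $c_\tau > 0$ normalizes both sides to total integral $\int_{\tilde S} \chi^n \wedge \eta$; existence and smoothness follow from \cite{aziz}. A uniform bound $-\psi_\tau \leq C(1 + E_t)$, independent of $\tau$ and $s$, is then extracted either by invoking the pluripotential $L^\infty$-estimate of \cite{TTR} or --- more in the spirit of this note --- by a De Giorgi iteration on the auxiliary equation modelled on \cite[Lemma 3]{GPT21}, where the only transverse ingredient needed is basic integration by parts, available since $d\eta$ is basic.

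The heart of the argument is the maximum principle applied on $\Omega_s$ to the basic test function
\[
H \;:=\; -(w - s) \;+\; \epsilon\bigl(-\psi_\tau + \Lambda\bigr)^{n/(n+1)},
\]
with $\Lambda$ chosen large enough that $-\psi_\tau + \Lambda \geq 1$ and $\epsilon > 0$ to be optimized. At an interior minimum, one takes $\operatorname{tr}_{\omega_{t,\vp_t}}$ of $\ddbarb H \geq 0$ on the transverse bundle $T\tilde S / \tilde{\sF}$, invokes the arithmetic-geometric mean inequality, and substitutes the two transverse Monge-Amp\`ere identities. After optimizing $\epsilon$, this should yield the pointwise bound
\[
\beta \Bigl(\tfrac{w - s}{A_s^{1/(n+1)}}\Bigr)^{(n+1)/n} \;\leq\; -\log\!\left(\frac{(\chi + \ddbarb \psi_\tau)^n \wedge \eta}{\chi^n \wedge \eta}\right) + C(1 + E_t)
\]
on $\Omega_s$, for some $\beta = \beta(n, \omega, \chi, \xi) > 0$. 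Exponentiating, integrating against $\chi^n \wedge \eta$, substituting the auxiliary Monge-Amp\`ere equation on the right, and using that $\int_{\tilde S}(\chi + \ddbarb \psi_\tau)^n \wedge \eta$ is preserved (foliated Stokes, valid since $d\eta$ is basic), delivers $C e^{C E_t}$. Letting $\tau \to 0$ and $\be \to \infty$ completes the proof.

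The hardest point will be the maximum-principle step: $\omega_{t,\vp_t}$ is only transversely semipositive, not K\"ahler on the total space $\tilde S$, so every trace, determinant and AM-GM manipulation has to be read on the transverse bundle $T\tilde S / \tilde{\sF}$. That this foliated reduction preserves the computation of \cite[\S 3]{GPT21} verbatim ultimately relies on the Killing property of the Reeb field $\xi$ together with the Molino-Masa structure theorems for homologically orientable foliations \cite{mol,Masa}, which supply the basic Hodge theory and the basic Stokes formula used throughout.
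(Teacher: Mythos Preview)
Your outline has the right architecture---auxiliary Monge--Amp\`ere equation, comparison function of Guo--Phong--Tong shape, maximum principle, then exponential integration---but two genuine gaps keep it from going through.

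\textbf{The auxiliary equation must have background $\omega_t$, not $\chi$.} This is precisely the point of the nef-class extension in \cite{GPTW21} that the paper follows. With your choice $(\chi+\ddbarb\psi_\tau)^n\wedge\eta=\cdots$, carrying out the maximum-principle computation at the minimum of $H$ (taking $\mathrm{tr}_{\omega_{t,\vp_t}}$ and using $\ddbarb\psi_\tau=\chi_{\psi_\tau}-\chi$) produces a term $\frac{\vep n}{n+1}(-\psi_\tau+\Lambda)^{-1/(n+1)}\La_{\omega_{t,\vp_t}}\chi$ with the wrong sign, and $\La_{\omega_{t,\vp_t}}\chi$ is \emph{not} uniformly bounded as $t\to 0$ since $\omega_{t,\vp_t}$ degenerates. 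The paper instead solves $(\omega_t+\ddbarb\psi_{t,k})^n\wedge\eta=c_tA_{s,k,\be}^{-1}\tau_k(-\vp_t+u_{t,\be}-s)e^F\chi^n\wedge\eta$ and builds the comparison function as $\Phi=-\vep(-\psi_{t,k}+u_{t,\be}+1+\La)^{n/(n+1)}-(\vp_t-u_{t,\be}+s)$; then the only trace terms that appear are $\La_{\omega_{t,\vp_t}}\omega_{t,\psi_{t,k}}$ (handled by AM--GM against the ratio of the two Monge--Amp\`ere right-hand sides, which cancels $c_te^F\chi^n$) and $\La_{\omega_{t,\vp_t}}\omega_{t,u_{t,\be}}$, whose coefficient is made nonnegative by the choice $\La=(n\vep/(n+1))^{n+1}$ and then dropped.

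\textbf{The concluding step is the transverse $\alpha$-invariant, not ``substitute and use volume preservation''.} The maximum principle does not yield your displayed inequality with $-\log\bigl((\chi+\ddbarb\psi_\tau)^n\wedge\eta\big/\chi^n\wedge\eta\bigr)$ on the right; it yields the \emph{linear} bound
\[
\Bigl(\tfrac{-\vp_t+\Nu_t-s}{A_s^{1/(n+1)}}\Bigr)^{(n+1)/n}\le C_n\bigl(-\psi_{t,k}+1+A_{s,k}\bigr)
\]
after letting $\be\to\infty$. Exponentiating and integrating against $\chi^n\wedge\eta$ then requires exactly $\int_{\tilde S}e^{-\al\psi_{t,k}}\chi^n\wedge\eta\le C'$, which is the basic $\al$-invariant of Zhang \cite{alpha invariant} applied to $\psi_{t,k}\in PSH(\tilde S,\omega_t,\xi)\subset PSH(\tilde S,a\chi,\xi)$. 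Your alternative route of first proving $-\psi_\tau\le C(1+E_t)$ by a De~Giorgi iteration on the auxiliary equation is circular: that iteration needs the right-hand side $((w-s)_++\tau)e^F$ controlled in $L^1(\log L)^p$, which in turn requires a priori control of $w=-\vp_t+\Nu_t$---the very thing you are proving. And if such an $L^\infty$ bound on $\psi_\tau$ were available, the lemma would follow just by exponentiating a constant; there is no role for Stokes or volume preservation.
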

\begin{proof}
	The key idea of the proof is to dominate the function (upto scaling by some constants) $(-\vp_t+\Nu_t-s)^{\frac{n+1}{n}}$ by some other basic pluri subharmonic function which is a solution to a variant of transverse complex Monge-Amp\`ere equation and apply the alpha invariant to that solution to get the desired inequality.
	For that we choose a sequence of smooth functions $\tau_k:\RR\to \RR_+$ defined by $$\tau_k(x)=\frac{1}{2}\left(\sqrt{x^2+k^{-1}}+x\right),$$ which  converges pointwise to $x\cdot\chi_{\RR_+}(x)$, where $\chi_{\RR+}$ is the characteristic function on $\RR_+$ as $k\to \infty$ and we solve an auxiliary transverse Monge-Amp\`ere equation 
	$$(\omega_t+\ddbarb \psi_{t,k})^n\wedge\eta=c_t\frac{\tau_k(-\vp_t+\Nu_t-s)}{A_{s,k,\be}}e^F\chi^n\wedge\eta, \quad \sup_{\tilde{S}}\psi_{t,k}=0, $$
	where $A_{s,k,\be}=\int_{\tilde{S}} \tau_k(-\vp_t+u_{t,\be}-s)e^F\chi^n\wedge\eta.$
	By definition $\psi_{t,k}\leq \Nu_t$ and by Proposition \ref{envelope approximation} and Theorem \ref{approx}, $u_{t,\be}$ converges to $\Nu_t$ uniformly as $\be \to \infty$, then we can take $\be$ large enough so that $$\psi_{t,k}\leq 		u_{t,\be}+1. $$
	Now we define a basic smooth function $$ \Phi=-\vep(-\psi_{t,k}+u_{t,\be}+1+\La)^{\frac{n}{n+1}}-(\vp_t-u_{t,\be}+s), $$ where the positive constants $\vep,\La$ will be choosen later. And let $$\Phi(x_0)=\sup_{x\in \tilde{S}}\Phi(x).$$ Now either $x_0\in \Omega_s^\circ$ or $x_0\in \tilde{S}\smallsetminus\Omega_s^\circ.$ Let us look at each case separately.\\
	\textbf{Case.1} Suppose that $x_0\in \Omega_s^\circ.$ Then let $\Delta_t(f)=n\frac{\ddbarb f\wedge\omega_{t,\vp_t}^{n-1}\wedge\eta}{\omega_{t,\vp_t}^n\wedge\eta}=\La_{\omega_{t,\vp_t}}\ddbarb f.$ Since $x_0$ is a point of maximum of $\Phi$, we 	get
\begin{align*}
	0\geq&\Delta_t\Phi(x_0)\\
		=&-\Delta_t\vp_t+\Delta_t u_{t,\be}-\frac{\vep n}{n+1}(-\psi_{t,k}+u_{t,\be}+1+\La)^{\frac{-1}{n+1}}(-\Delta_t\psi_{t,k}+\Delta_t u_{t,\be})\\
		&+\frac{\vep n}{(n+1)^2}(-\psi_{t,k}+u_{t,\be}+1+\La)^{-\frac{n+2}{n+1}}\La_{\omega_{t,\vp_t}}\left(\sqrt{-1}\d_b(\psi_{t,k}-u_{t,\be})\wedge\dbar_b(\psi_{t,k}-u_{t,\be})\right)\\
		\geq&-n+\La_{\omega_{t,\vp_t}}\omega_{t,u_{t,\be}}+\frac{\vep n}{n+1}(-\psi_{t,k}+u_{t,\be}+1+\La)^{\frac{-1}{n+1}}\La_{\omega_{t,\vp_t}}(\omega_{t,\psi_{t,k}}-\omega_{t,u_{t,\be}})\\
		=&-n+\frac{\vep n}{n+1}(-\psi_{t,k}+u_{t,\be}+1+\La)^{\frac{-1}{n+1}}\La_{\omega_{t,\vp_t}}\omega_{t,\psi_{t,k}}\\
		&+\left(1-\frac{\vep n}{n+1}(-\psi_{t,k}+u_{t,\be}+1+\La)^{\frac{-1}{n+1}}\right)\La_{\omega_{t,\vp_t}}\omega_{t,u_{t,\be}}\\
		\geq&-n+\frac{\vep n^2}{n+1}(-\psi_{t,k}+u_{t,\be}+1+\La)^{\frac{-1}{n+1}}\left(\frac{\omega_{t,\psi_{t,k}}^n\wedge\eta}{\omega_{t,\vp_t}^n\wedge\eta}\right)^{\frac{1}{n}}+\left(1-\frac{n\vep}{n+1}\La^{\frac{-1}{n+1}}\right)\La_{\omega_{t,\vp_t}}\omega_{t,u_{t,\be}}\\
		\geq&-n+\frac{\vep n^2}{n+1}(-\psi_{t,k}+u_{t,\be}+1+\La)^{\frac{-1}{n+1}}\left(\tau_k(-\vp_t+u_{t,\be}-s)A_{s,k,\be}^{-1}\right)^{\frac{1}{n}}\\
		&+\left(1-\frac{n\vep}{n+1}\La^{\frac{-1}{n+1}}\right)\La_{\omega_{t,\vp_t}}\omega_{t,u_{t,\be}}\\
		\geq&-n+\frac{\vep n^2}{n+1}(-\psi_{t,k}+u_{t,\be}+1+\La)^{\frac{-1}{n+1}}\left(-\vp_t+u_{t,\be}-s\right)^{\frac{1}{n}}(A_{s,k,\be})^{\frac{-1}{n}}\\
		&+\left(1-\frac{n\vep}{n+1}\La^{\frac{-1}{n+1}}\right)\La_{\omega_{t,\vp_t}}\omega_{t,u_{t,\be}}
\end{align*}
	Now we choose the constant $\La $ so that the last term in the sum is zero, that is $$\La=\left(\frac{n\vep}{n+1}\right)^{n+1}, $$ then at the point $x_0\in \Omega_s$, we get that $$ -(\vp_t-u_{t,\be}+s)\leq \left(\frac{n+1}{n\vep}\right)^n A_{s,k,\be}(-\psi_{t,k}+u_{t,\be}+\La+1)^{\frac{n}{n+1}} $$
	now if we choose $$ \vep^{n+1}=\left(\frac{n+1}{n}\right)^n A_{s,k,\be},	$$ then the above inequality becomes $$ -(\vp_t-u_{t,\be}+s)\leq\vep(-\psi_{t,k}+u_{t,\be}+\La+1)^{\frac{n}{n+1}}. $$ That is $\Phi(x_0)\leq 0.$\\
	\textbf{Case.2} Suppose $x_0\in \Omega_s^\circ.$ Then by definition of $\Omega_s$, we get $$\Phi(x_0)\leq -(\vp_t-u_{t,\be}+s)\leq -U_t+u_{t,\be}\leq \ep_\be, $$
	where we chose $\ep_\be>0$  using Proposition \ref{envelope approximation} so that $\ep_\be\to 0$ as $\be\to \infty.$
	From both the cases, we can get that on $\tilde{S}$ we have the following inequality $$ (\vp_t+u_{t,\be}-s)^{\frac{n+1}{n}}\leq C_n(A_{s,k,\be})^{\frac{1}{n}}(-\psi_{t,k}+u_{t,\be}+1+A_{s,k,\be})+\ep_\be^{\frac{n+1}{n}}. $$
	Since $u_{t,\be}\to U_t$ uniformly as $\be\to \infty$, by letting $\be\to\infty$ in the above inequality we get $$ (-\vp_t+\Nu_t-s)^{\frac{n+1}{n}}\leq C_n(A_{s,k})^{\frac{1}{n}}(-\psi_{t,k}+\Nu_t+1+A_{s,k}) , $$ where $A_{s,k}=\int_{\tilde{S}}\tau_k(-\vp_t+\Nu_t-s)e^F\chi^n\wedge\eta .$
	And since $\Nu_t\leq 0$, we can reduce the above inequality becomes \[\label{psh inequality}\frac{(-\vp_t+\Nu_t-s)^{\frac{n+1}{n}}}{(A_{s,k})^{\frac{1}{n}}}\leq C_n(-\psi_{t,k}+1+A_{s,k}). \]
	By \cite[Proposition 3.3]{alpha invariant}, there exists positive constants $\al$ and $C'$ such that $$\int_{\tilde{S}}e^{-\al\psi_{t,k}}\chi^n\wedge\eta<C'.$$ And let $\be=\frac{\al}{C_n}$, then there exist a positive constant  $C$ such that $$ \int_{\Omega_s}\exp{\left(\be\frac{(-\vp_t+U_t-s)^{\frac{n+1}{n}}}{(A_{s,k})^{\frac{1}{n}}}\right)}\chi^n\wedge\eta\leq C\exp(CA_{s,k}). $$
	Since $A_{s,k}\to A_s$ as $k\to\infty$, letting $k\to\infty$ in the above inequality implies that $$ \int_{\Omega_s}\exp{\left(\be\frac{(-\vp_t+U_t-s)^{\frac{n+1}{n}}}{(A_s)^{\frac{1}{n}}}\right)}\chi^n\wedge\eta\leq C\exp(CA_s). $$ Then the desired inequality follows from the inequality $A_s\leq E_t$ for any $s>0.$
\end{proof}
	Next we prove that the constants $E_t$'s are bounded uniformly for $t\in (0,1].$
\begin{lem}
	\label{et bound}There is a constant $C:=C(n,\omega,\chi,\xi,\|e^F\|_{L^1(\log L)^p})$ such that $$E_t=\int_{\tilde{S}}(-\vp_t+U_t)e^F\chi^n\wedge\eta\leq C, \text{ for all } t\in(0,1].$$
\end{lem}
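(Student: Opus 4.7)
The plan is to bound $E_t$ by viewing $-\vp_t$ as a psh function with respect to a fixed auxiliary transverse K\"ahler class, and then combining the transverse alpha invariant with a Young-type inequality. First I would note that since $\Nu_t\le 0$ by definition of the envelope, we have
\[
E_t\;\leq\;\int_{\tilde{S}}(-\vp_t)\,e^F\,\chi^n\wedge\eta,
\]
so it suffices to bound this integral uniformly in $t\in(0,1]$.

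Next I would fix a reference K\"ahler class. Since $\omega_t=\omega+t\chi\leq \omega+\chi$ for every $t\in(0,1]$, the relation $\omega_t+\ddbarb\vp_t\geq 0$ forces $(\omega+\chi)+\ddbarb\vp_t\geq (1-t)\chi\geq 0$, so $\vp_t\in PSH(\tilde{S},\omega+\chi,\xi)$ with $\sup_{\tilde{S}}\vp_t=0$. The class $[\omega+\chi]$ is a fixed transverse K\"ahler class, hence the transverse alpha invariant (in the form used in the proof of Lemma \ref{lemma nef}, via \cite[Proposition 3.3]{alpha invariant}) provides positive constants $\al$ and $C_0$, depending only on $n$, $\omega$, $\chi$, $\xi$, such that
\[
\int_{\tilde{S}} e^{-\al\vp_t}\,\chi^n\wedge\eta\;\leq\; C_0 \qquad \text{for all } t\in(0,1].
\]

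With this in hand, I would invoke Young's inequality for the conjugate Orlicz pair $\Phi(x)=e^x-1$ and $\Phi^*(y)=(1+y)\log(1+y)-y$, which gives the pointwise estimate $ab\leq e^a+b\log(1+b)$ for $a,b\geq 0$. Setting $a=-\al\vp_t$ and $b=e^F$, dividing by $\al$, and integrating over $\tilde{S}$ against $\chi^n\wedge\eta$ yields
\[
E_t\;\leq\;\frac{1}{\al}\left[\int_{\tilde{S}} e^{-\al\vp_t}\,\chi^n\wedge\eta\;+\;\int_{\tilde{S}} e^F\,\log(1+e^F)\,\chi^n\wedge\eta\right].
\]
The first term is controlled by $C_0$ via the alpha invariant step above, while the second term is dominated by $\|e^F\|_{L^1(\log L)^p}$ since $p>n\geq 1$. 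This produces the claimed uniform bound on $E_t$, with the asserted dependence on $n,\omega,\chi,\xi,\|e^F\|_{L^1(\log L)^p}$.

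The only genuine obstacle is ensuring that a single alpha invariant works uniformly for the family $\{\vp_t\}$ even though $\omega_t$ may degenerate as $t\to 0$; this is resolved by passing to the fixed K\"ahler class $[\omega+\chi]$ using the monotonicity $\omega_t\leq\omega+\chi$. Beyond this observation, the argument is a direct application of Young's inequality together with the previously recorded alpha invariant estimate.
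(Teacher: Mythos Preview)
Your proof is correct. The overall strategy matches the paper's: reduce to bounding $\int_{\tilde S}(-\vp_t)e^F\chi^n\wedge\eta$ via $\Nu_t\le 0$, embed all the $\vp_t$ into a single fixed transverse K\"ahler psh class, and then combine the transverse $\alpha$-invariant with an integral inequality. The paper chooses the fixed class $a\chi$ (with $a$ large enough that $\omega_t<a\chi$), while you choose $\omega+\chi$; this is cosmetic.

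The genuine difference is in the integral inequality step. The paper applies Jensen's inequality to $\exp$ with respect to the probability measure $V_t^{-1}\omega_{t,\vp_t}^n\wedge\eta$, and then substitutes the Monge--Amp\`ere equation $\omega_{t,\vp_t}^n\wedge\eta=c_te^F\chi^n\wedge\eta$ so that the weight $e^F$ appears automatically; the entropy $\int Fe^F$ shows up and is bounded by $\|e^F\|_{L^1(\log L)^1}$. Your argument instead uses the Orlicz--Young inequality $ab\le e^a+b\log(1+b)$ directly, with $a=-\alpha\vp_t$ and $b=e^F$, and never invokes the equation. This makes your version slightly more elementary and more robust: it would give the same bound for any family of $(\omega+\chi)$-psh functions normalised by $\sup=0$, not just solutions of the Monge--Amp\`ere equation. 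Both routes land on a bound controlled by the $L^1(\log L)^1$ norm of $e^F$, hence a fortiori by the $L^1(\log L)^p$ norm.
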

\begin{proof}
	We have $\omega_t=\omega+t\chi$, then we can find a constant $a>0$ such that $\omega_t< a\chi$ for all $t\in(0,1]$ then, $\vp_t\in PSH(S,\omega_t,\xi)$ implies $\vp_t\in PSH(S,a\chi,\xi)$. Then by the basic $\al$-invariant, \cite[Proposition 3.3]{alpha invariant} there exist an $\al:=\al(S,\xi,\chi)>0$ such that $$\frac{1}{V_t}\int_{\tilde{S}}\exp\left(-\log\frac{\omega_{t,\vp_t}^n\wedge\eta}{a^n\chi^n\wedge\eta}-\al\vp_t\right)\omega_{t,\vp_t}^n\wedge\eta=\frac{1}{V_t}\int_{\tilde{S}}a^ne^{-\al\vp_t}\chi^n\wedge\eta\leq \frac{a^nC(n,\xi,\chi)}{V_t}.$$ 
	By Jensen's inequality we get $$\frac{1}{V_t}\int_{\tilde{S}}\left(-\log\frac{\omega_{t,\vp_t}^n\wedge\eta}{a^n\chi^n\wedge\eta}-\al\vp_t\right)\omega_{t,\vp_t}^n\wedge\eta\leq\log C(n,\xi,\chi)-\log V_t,$$
	this implies
\begin{align*}
	\frac{1}{V_t}\int_{\tilde{S}}(-\al\vp_t)\omega_{t,\vp_t}^n\wedge\eta\leq&\frac{1}{V_t}\int_{\tilde{S}}\log(c_ta^{-n}e^F)\omega_{t,\vp_t}^n\wedge\eta+\log C(n,\xi,\chi)-\log V_t\\
	\leq &\log(c_ta^{-n})+\frac{c_t}{V_t}\int_{\tilde{S}} Fe^F\chi^n\wedge\eta+\log C(n,\xi,\chi)-\log V_t\\
	\leq &\log\frac{V_t}{a^n\int_{\tilde{S}}\chi^n\wedge\eta}+\frac{1}{\int_{\tilde{S}}\chi^n\wedge\eta}\|e^F\|_{L^1(\log L)^1}+\log C(n,\xi,\chi)-\log V_t\\
	\frac{1}{V_t}\int_{\tilde{S}}(-\al\vp_t)c_te^F\chi^n\wedge\eta\leq&\log\frac{C(n,\xi,\chi)}{a^n\int_{\tilde{S}}\chi^n\wedge\eta}+\frac{1}{\int_{\tilde{S}}\chi^n\wedge\eta}\|e^F\|_{L^1(\log L)^1}
\end{align*}
	thus we get
	$$\int_{\tilde{S}}(-\vp_t)e^F\chi^n\wedge\eta\leq \frac{\int_{\tilde{S}}\chi^n\wedge\eta}{\al}\log\frac{C(n,\xi,\chi)}{a^n\int_{\tilde{S}}\chi^n\wedge\eta}+\al^{-1}\|e^F\|_{L^1(\log L)^1}.$$
	Since $\vp_t\leq U_t\leq 0$ the last inequality yields
	$$\int_{\tilde{S}}(-\vp_t+U_t)e^F\chi^n\wedge\eta\leq C,$$ where the constant $C:=C(n,\xi,\chi,\|e^F\|_{L^1(\log L)^1}).$
\end{proof}
	Note that the constant $C$ in the above lemma does not depend on $ t $, which is a key point in establishing the $L^\infty$ estimate. Now we proceed to prove our main theorem.
	
\begin{proof}[\textbf{Proof of Theorem \ref{mt}}]
 	Define $h:\RR_+\to\RR_+$ by $h(x)=(\log(1+x))^p$. Then $h$ is a strictly increasing function with $h(0)=0$ and let $$v:=\frac{\be}{2}\left(\frac{-\vp_t+\Nu_t-s}{(A_s)^\frac{1}{n+1}}\right)^{\frac{n+1}{n}}$$ then by the generalized Young's inequality with respect to $h,$ for any $x\in \Omega_s,$ we have 
\begin{align*}
 	v(x)^pe^{F(x)}\leq&\int_{0}^{e^{F(x)}}h(y)dy+\int_{0}^{v(x)^p}h^{-1}(y)dy\\
 	\leq&e^{F(x)}(1+|F(x)|)^p+C(p)e^{2v(x)}.
\end{align*} This implies that
\begin{align*}
 	\int_{\Omega_s}v(x)^pe^{F(x)}\chi^n\wedge\eta\leq&\int_{\Omega_s}e^{F(x)}(1+|F(x)|)^p\chi^n\wedge\eta+\int_{\Omega_s}e^{2v(x)}\chi^n\wedge\eta\\
 	\leq& \|e^F\|_{L^1(\log L)^p}+C+Ce^{CE_t},
\end{align*}
	where the second inequality followed from Lemma \ref{lemma nef} and the constant $C=C(n,\xi,\chi,\omega)$. And by the definition of $v$ we get from the last inequality that $$\int_{\Omega_s}(-\vp_t+\Nu_t-s)^{\frac{(n+1)p}{n}}e^F\chi^n\wedge\eta\leq2^p\be^{-p}(A_s)^{\frac{p}{n}}(\|e^F\|_{L^1(\log L)^p}+C+Ce^{CE_t}).$$
	Now, 
\begin{align*}
 	A_s=&\int_{\Omega_s}(-\vp_t+\Nu_t-s)e^F\chi^n\wedge\eta\\
 	\leq &\left(\int_{\Omega_s}(-\vp_t+\Nu_t-s)^{\frac{(n+1)p}{n}}e^F\chi^n\wedge\eta\right)^{\frac{n}{(n+1)p}}\left(\int_{\Omega_s}e^F\chi^n\wedge\eta\right)^\frac{1}{q}\\
 	\leq&(A_s)^\frac{1}{n+1}\left(2^p\be^{-p}(\|e^F\|_{L^1(\log L)^p}+C+Ce^{CE_t}) \right)^{\frac{n}{(n+1)p}}\left(\int_{\Omega_s}e^F\chi^n\wedge\eta\right)^\frac{1}{q},
\end{align*}
	where $q>1$ satisfies $\frac{n}{(n+1)p}+\frac{1}{q}=1$ and for the second inequality we use the H\"older inequality. Thus the above inequality implies \[\label{fin ineq}A_s\leq \left(2^p\be^{-p}(\|e^F\|_{L^1(\log L)^p}+C+Ce^{CE_t})\right)^\frac{1}{p}\left(\int_{\Omega_s}e^F\chi^n\wedge\eta\right)^\frac{n+1}{nq}.\]
	By the definition of $q$, we get that $$\frac{n+1}{nq}=1+\frac{1}{n}-\frac{1}{p}=1+\delta_0,$$ where $\delta_0=\frac{p-n}{pn}>0.$ By Lemma \ref{et bound}, we can find a $B_0\in (0,\infty)$ such that $$\left(2^p\be^{-p}(\|e^F\|_{L^1(\log L)^p}+C+Ce^{CE_t})\right)^\frac{1}{p}<B_0, \text{ for all } t\in (0,1].$$ Then the inequality \eqref{fin ineq} can be rewritten as \[\label{degiorgi ineq}A_s\leq B_0\left(\int_{\Omega_s}e^F\chi^n\wedge\eta\right)^{1+\delta_0}.\] In $\Omega_{s+r}$, we have $-\vp_t+\Nu_t-s\geq r$ and  if we define the function $f:\RR_+\to\RR_+$ by $f(s)=\int_{\Omega_s}e^F\chi^n\wedge\eta$ then $f$ is decreasing, and using the inequality \eqref{degiorgi ineq} we get that $$rf(s+r)\leq B_0f(s)^{1+\delta_0},\ \text{ for any } r\in [0,1] \text{ and } s>0,$$ then by Lemma \ref{degiorgi lemma} we can find an $s_\infty>0$ such that $$f(s)=0, \text{ for all } s\geq s_\infty.$$ That is $$\int_{\Omega_s}(-\vp_t+\Nu_t-s)e^F\chi^n\wedge\eta=0, \text{ for all } s\geq s_\infty,$$ so $\Omega_s=\emptyset$ for all $s\geq s_\infty.$ Since $B_0$ is independent of $t$, so does the constant $s_\infty$. Thus $-\vp_t+\Nu_t\leq s_\infty$ and by definition of $\Nu_t$, we have $-\vp_t+\Nu_t\geq 0.$
\end{proof}
	We end this section with a remark about the sharp $L^\infty$ estimate for a general transverse complex Hessian equations.
\begin{rem}\label{rem:more-gen-comm-hessian} 
	General complex Hessian equations on Sasakian manifolds was studied by Feng and Zeng in \cite{FZ19}. We define $$\Nu_{t,k}:=\sup\{v :v\in SH_k(\tilde{S},\xi,\chi,\omega_t)\cap C^2(\tilde{S}),v\leq 0\}$$ where $v\in SH_k(\tilde{S},\xi,\chi,\omega_t)\cap C^2(\tilde{S})$ means that the eigenvalue vector $\la(\chi^{-1}(\omega_t+\ddbarb v))$ of the metric $\chi^{-1}(\omega_t+\ddbarb v)$	lies in $$\Ga_k:=\{x\in \RR^n:\sigma_i(x)>0,1\leq i\leq k\},$$ where $\sigma_i$ is the $i^{th}$ symmetric polynomial in $\RR^n$ and $[\chi]$ and $[\omega_t]$ are the transverse K\"ahler classes as it is defined in the introduction.	
	Then the general complex Hessian equations are defined as $(1\leq k\leq n)$ \[\label{hessian eqns}\begin{cases}
	(\omega_t\ddbarb\vp_t)^k\wedge\chi^{n-k}\wedge\eta=c_te^F\chi^n\wedge\eta,\\ \sup_{\tilde{S}}\vp_t=0 \text{ and } \la(\omega_{t,\vp_t})\in \Ga_k.
	\end{cases}\]
	Using the proof of the existence of the solution of the equation $\eqref{hessian eqns}$ on Sasaki manifolds in \cite[Corollary 1.5]{FZ19}, we can get the solution $\vp_t$ of the equation \eqref{hessian eqns} on the homologically orientable transverse K\"ahler manifold also. 
	We let $$E_t(\vp_t):=\int_{\tilde{S}}(-\vp_t+\Nu_{t,k})e^{\frac{n}{k}F}\chi^n\wedge\eta,$$ be the entropy functional corresponding the equation \eqref{hessian eqns}, then similar to \cite[Lemma 7]{GPT21} we can also have the inequality \[\label{et ineq} E_t(\vp_t)\leq \frac{c_t^n}{V_t}\|e^{\frac{n}{k}F}\|_{L^q}\|-\vp_t+\Nu_{t,k}\|_{L^p}, \] where $\frac{1}{p}+\frac{1}{q}=1.$
	And one of the crucial fact used in proving Theorem \ref{mt}, is that the approximation of the envelope (Proposition \ref{envelope approximation}, Theorem \ref{approx}). By adapting the proof of \cite[Lemma 2]{GPTW21} to the transverse K\"ahler setting, we can also prove as we proved Proposition \ref{envelope approximation} and Theorem \ref{approx} using the  maximum principle, that the solutions $u_\be$ of the equation $$(\omega_t+\ddbarb u_\be)^k\wedge\chi^{n-k}\wedge\eta=c_te^{\be u_\be}\chi^n\wedge\eta,$$ (which exists by \cite[Corollary 1.5]{FZ19}) converges uniformly to $\Nu_{t,k}$ as $\be\to \infty.$

	Now by modifying the proof of \cite[Lemma 8]{GPT21} to the transverse K\"ahler setting, we can prove that, for any $p\in (0,\frac{n}{n-k})$, there exists a constant $C:=C(n,p,\xi,\chi)$ such that $$\|-\vp_t+\Nu_{t,k}\|_{L^p(\chi^n\wedge\eta)}\leq C,$$ this with inequality \eqref{et ineq} implies that $E_t(\vp_t)$ bounded once the right hand-side $e^F\in L^q(\chi^n\wedge\eta)$.
	
	And then following the similar arguments as in Theorem \ref{mt}, modified to the equation \eqref{hessian eqns}, we have the sharp $L^\infty$ estimate for the transverse complex Hessian equations as a following theorem.
\begin{thm}
	For a given $p>n$, there exists a constant
	 $$ C:=C(n,p,\xi,\|e^{\frac{n}{k}F}\|_{L^1(\log L)^p},\frac{c_t}{\int_{\tilde{S}}\omega_t^k\wedge\chi^{n-k}\wedge\eta})$$ such that $$0\leq -\vp_t+\Nu_{t,k}\leq C.$$
\end{thm}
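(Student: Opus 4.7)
The plan is to mimic the three-step architecture used for Theorem \ref{mt}: exponential integrability via an auxiliary transverse Monge-Amp\`ere comparison, a Young-type conversion of that into weighted $L^p$-integrability, and a De Giorgi-style iteration on the sub-level sets $\Omega_s=\{-\vp_t+\Nu_{t,k}\ge s\}$. The three inputs already collected in Remark \ref{rem:more-gen-comm-hessian} are all in place: the uniform approximation $u_\be\to\Nu_{t,k}$ at rate $\be^{-1}\log\be$ for solutions of the auxiliary Hessian equation from \cite[Corollary 1.5]{FZ19}; the $L^p(\chi^n\wedge\eta)$ bound $\|-\vp_t+\Nu_{t,k}\|_{L^p}\le C$ for every $p<n/(n-k)$; and, by H\"older, the uniform entropy bound $E_t(\vp_t)\le C$ whenever $e^{(n/k)F}\in L^q$ for some $q>1$, which is in turn implied by the hypothesis $e^{(n/k)F}\in L^1(\log L)^p$ for some $p>n$.

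The central step is the Hessian analog of Lemma \ref{lemma nef}. For fixed $s>0$ one solves an auxiliary \emph{transverse Monge-Amp\`ere} equation on the reference class $[\chi]$,
\[
(\chi+\ddbarb\psi_{j,\be})^n\wedge\eta\;=\;c_0\,\frac{\tau_j(-\vp_t+u_\be-s)}{A_{s,j,\be}}\,e^{(n/k)F}\chi^n\wedge\eta,\qquad\sup_{\tilde S}\psi_{j,\be}=0,
\]
where $\tau_j$ is the cutoff sequence of Lemma \ref{lemma nef}, $A_{s,j,\be}$ is the normalising integral, and $c_0$ is the fixed constant required for solvability. Crucially $\psi_{j,\be}\in PSH(\tilde S,\chi,\xi)$, which is what opens access to the basic alpha invariant later. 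At a maximum point of the barrier
\[
\Phi\;=\;-\vep\bigl(-\psi_{j,\be}+u_\be+1+\La\bigr)^{n/(n+1)}\;-\;(\vp_t-u_\be+s),
\]
one applies the linearised Hessian operator
\[
\Delta_{t,k} f\;=\;k\,\frac{\ddbarb f\wedge\omega_{t,\vp_t}^{k-1}\wedge\chi^{n-k}\wedge\eta}{\omega_{t,\vp_t}^k\wedge\chi^{n-k}\wedge\eta}.
\]
The substitute for the AM-GM used in Lemma \ref{lemma nef} is a Newton-Maclaurin-type inequality comparing the $\Delta_{t,k}$-trace of $\chi+\ddbarb\psi_{j,\be}$ to the $n$-th root of the volume ratio $\bigl((\chi+\ddbarb\psi_{j,\be})^n\wedge\eta\bigr)/\bigl(\omega_{t,\vp_t}^k\wedge\chi^{n-k}\wedge\eta\bigr)$. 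With the choices $\La=(n\vep/(n+1))^{n+1}$ and $\vep^{n+1}\asymp A_s$, sending $j,\be\to\infty$ yields
\[
(-\vp_t+\Nu_{t,k}-s)^{(n+1)/n}\;\le\;C_n\,A_s^{1/n}\,(-\psi_t+1+A_s),
\]
and the basic alpha invariant \cite[Proposition 3.3]{alpha invariant} on $(\tilde S,\chi,\xi)$ then delivers
\[
\int_{\Omega_s}\exp\Bigl(\be\,\frac{(-\vp_t+\Nu_{t,k}-s)^{(n+1)/n}}{A_s^{1/n}}\Bigr)\chi^n\wedge\eta\;\le\;C\,e^{CE_t}.
\]

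The finishing argument is the same bookkeeping as in the proof of Theorem \ref{mt}, carried out with the weight $e^{(n/k)F}$ in place of $e^F$. Generalised Young's inequality with $h(x)=(\log(1+x))^p$ converts the exponential integrability into
\[
\int_{\Omega_s}(-\vp_t+\Nu_{t,k}-s)^{(n+1)p/n}\,e^{(n/k)F}\chi^n\wedge\eta\;\le\;C\,A_s^{p/n}\bigl(\|e^{(n/k)F}\|_{L^1(\log L)^p}+e^{CE_t}\bigr),
\]
and H\"older with the conjugate exponent $q$ determined by $\frac{n}{(n+1)p}+\frac{1}{q}=1$, applied to $A_s=\int_{\Omega_s}(-\vp_t+\Nu_{t,k}-s)e^{(n/k)F}\chi^n\wedge\eta$, collapses to
\[
A_s\;\le\;B_0\Bigl(\int_{\Omega_s}e^{(n/k)F}\chi^n\wedge\eta\Bigr)^{1+\delta_0},\qquad\delta_0=\frac{p-n}{np}>0\ \ \text{iff}\ \ p>n.
\]
The constant $B_0$ is independent of $t$ thanks to the uniform entropy bound, and applying Lemma \ref{degiorgi lemma} to $f(s)=\int_{\Omega_s}e^{(n/k)F}\chi^n\wedge\eta$ forces $\Omega_s=\emptyset$ for $s\ge s_\infty$, yielding the claimed bound.

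The main obstacle is the Newton-Maclaurin step: one must verify that the linearised Hessian operator $\Delta_{t,k}$ dominates the $n$-th root of the auxiliary Monge-Amp\`ere density relative to the background reference measure $\omega_{t,\vp_t}^k\wedge\chi^{n-k}\wedge\eta$, and that the resulting concave combination in the barrier computation survives the mixed-degree situation $k<n$. Once this pointwise linear-algebra inequality is secured, tracking the three exponents $(n+1)/n$, $n/k$, and the H\"older conjugate $q$ produces the sharp threshold $p>n$ announced in the statement automatically.
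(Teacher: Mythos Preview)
Your three-step architecture is exactly what the paper sketches in Remark \ref{rem:more-gen-comm-hessian}, and your identification of the G\aa rding/Newton--Maclaurin comparison as the one new pointwise ingredient is correct. There are, however, two gaps.

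The first concerns the auxiliary equation. You place it on the fixed class $[\chi]$ but retain the envelope approximation $u_\be$ inside the fractional power of the barrier. For $(-\psi_{j,\be}+u_\be+1+\La)^{n/(n+1)}$ to be defined one needs $\psi_{j,\be}\le u_\be+1+\La$; membership in $PSH(\tilde{S},\chi,\xi)$ with $\sup\psi_{j,\be}=0$ gives only $\psi_{j,\be}\le 0$, while $u_\be\to\Nu_{t,k}$ can be arbitrarily negative as $t\to 0$, so the barrier is not well defined uniformly in $t$. The remedy, and what Lemma \ref{lemma nef} actually does in the Monge--Amp\`ere case, is to solve the auxiliary Monge--Amp\`ere equation on $[\omega_t]$: then $\psi_{j,\be}\in PSH(\tilde{S},\omega_t,\xi)\subset SH_k(\tilde{S},\xi,\chi,\omega_t)$ (since $\Gamma_n\subset\Gamma_k$), hence $\psi_{j,\be}\le\Nu_{t,k}\le u_\be+\ep_\be$. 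The alpha-invariant step is unaffected because $\omega_t\le a\chi$ forces $PSH(\omega_t)\subset PSH(a\chi)$, exactly as in Lemma \ref{et bound}. With this modification your linear-algebra step becomes G\aa rding's inequality plus Maclaurin applied to the strictly positive form $\omega_{t,\psi_{j,\be}}$, and the computation closes as you outline.

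The second is the claim that $e^{(n/k)F}\in L^1(\log L)^p$ implies $e^{(n/k)F}\in L^q$ for some $q>1$; this is false, as the Orlicz class $L^1(\log L)^p$ sits strictly between $L^1$ and every $L^q$ with $q>1$. The H\"older route to the entropy bound recorded in Remark \ref{rem:more-gen-comm-hessian}, via the uniform $L^{p'}$ bound on $-\vp_t+\Nu_{t,k}$ for $p'<n/(n-k)$, genuinely requires $e^{(n/k)F}\in L^{q'}$ for the conjugate exponent $q'>n/k$. Either take this as the working hypothesis, or supply a separate argument bounding $E_t$ directly from the Orlicz norm; the Jensen/alpha-invariant argument of Lemma \ref{et bound} does not transfer, since $\vp_t\in SH_k(\omega_t)$ need not lie in any $PSH(a\chi)$.
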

\end{rem}

\section{Regularity of the conical Calabi-Yau potential}\label{potential}

\subsection{From cone$(Y)$ to its link$(S)$:}\label{cone to link}
	In this section, we explain how we can convert the problem of finding the apriori estimates for the metric in the affine normal $\QQ$-Gorenstein variety to its link a transverse K\"ahler manifold. We accomplish this by exploiting the inherent symmetry of the cone.
	
	As an application of Berman's construction of a $\mathbb{T}$-invariant K\"ahler metric $\hat{\chi}$ on the resolution of $Y$(see, Lemma \ref{metric on desing}) and the basic transverse K\"ahler metric $\chi$ (see, Theorem \ref{tr metric on sing}), we have the following theorem and this will needed while obtaining the $L^\infty$-estimate.
\begin{thm}\label{h orientable}
	The foliated manifold $\left(\tilde{S},\tilde{\sF}_\xi\right)$ is a transverse K\"ahler manifold which is homologically orientable.
\end{thm}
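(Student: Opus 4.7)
My strategy is to exhibit a nonzero class in top basic cohomology, namely $[\chi^n]\in H^{2n}_B(\tilde{S},\RR)$. Since $\tilde{S}$ has real dimension $2n+1$ and the foliation $\tilde{\sF}_\xi$ has one-dimensional leaves, basic forms live in degrees $0,\ldots,2n$; showing $[\chi^n]\neq 0$ therefore yields $H^{2n}_B(\tilde{S},\RR)\neq 0$, which is exactly homological orientability. The basic transverse K\"ahler form $\chi$ is already furnished by Theorem~\ref{tr metric on sing}, so $\chi^n$ is automatically a closed basic $2n$-form and the only real content is to rule out its $d_B$-exactness.

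To detect this class I first build a $\TT$-invariant Riemannian metric $g$ on $\tilde{S}$ by averaging an arbitrary Riemannian metric over the compact torus $\TT$; the $\TT$-action descends to $\tilde{S}$ because the $\RR_+$-action by $-J\xi$ used to form the quotient commutes with the abelian $\TT_\CC$-action on $\tilde{Y}$. Since $\xi$ lies in the Lie algebra of $\TT$, it is Killing for $g$, so the $1$-form
\[
\eta := \frac{g(\xi,\cdot)}{g(\xi,\xi)}
\]
satisfies $\eta(\xi)=1$; Cartan's formula $\sL_\xi\eta = i_\xi d\eta + d(i_\xi\eta)$ combined with $\sL_\xi\eta=0$ forces $i_\xi d\eta = 0$, and $\sL_\xi d\eta = d\sL_\xi\eta = 0$, so $d\eta$ is basic. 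Because $\chi^n$ is transversely positive and $\eta$ pairs nontrivially with $\xi$, the top form $\eta\wedge\chi^n$ is pointwise a positive volume form, and in particular $\int_{\tilde{S}}\eta\wedge\chi^n>0$.

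The remaining step is a Stokes argument. If $\chi^n = d\alpha$ for some basic $(2n-1)$-form $\alpha$, then $d(\eta\wedge\alpha) = d\eta\wedge\alpha - \eta\wedge d\alpha$, and integrating over the compact link $\tilde{S}$ produces
\[
\int_{\tilde{S}}\eta\wedge\chi^n \;=\; \int_{\tilde{S}} d\eta\wedge\alpha.
\]
But $d\eta\wedge\alpha$ is a basic $(2n+1)$-form on a manifold whose transverse dimension is $2n$; any such basic form must annihilate the nowhere-vanishing Reeb vector $\xi$, which for a top form forces it to vanish identically. Hence the right-hand side is zero, contradicting the strict positivity of the left-hand side, and $[\chi^n]\neq 0$. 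The main obstacle I anticipate is not this closing calculation, which is essentially the standard Reeb-contact trick, but the structural bookkeeping around the geometric setup: verifying that $\TT$ does descend cleanly to $\tilde{S}$, that $\tilde{S}$ is compact (which follows from $\pi$ being proper and the link $S$ being compact), and that the averaged metric really produces a Killing $\xi$ together with an $\eta$ having all the listed properties --- after which the display above closes the proof.
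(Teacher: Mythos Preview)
Your proposal is correct and follows essentially the same route as the paper: build a $1$-form $\eta$ with $\eta(\xi)=1$ and $d\eta$ basic, observe that $\eta\wedge\chi^n$ is a positive volume form, and then use Stokes together with the vanishing of basic $(2n{+}1)$-forms to rule out $\chi^n=d\alpha$. The only minor variation is that you obtain the Killing metric by averaging over $\TT$, whereas the paper restricts the metric $h(\cdot,\cdot)=\hat{\chi}(\cdot,J\cdot)$ coming from Lemma~\ref{metric on desing}; both constructions serve the same purpose and the remaining argument is identical.
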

\begin{proof}
	From the construction we can see that $\tilde{S}$ is transversely holomorphic manifold. Moreover, $\tilde{S}$ can be identified with the set $\{y\in \tilde{Y}\smallsetminus\pi^{-1}\{y_0\}:(f_\xi\circ\pi)(y)=1\}$ and the restriction of the metric $h(\cdot,\cdot)=\hat{\chi}(\cdot,J\cdot)$ constructed in Lemma \ref{metric on desing} to $\tilde{S}$ gives us a Riemannian metric $h$ which is Killing with respect to the vector field $\xi$.
	
	And we define a smooth nowhere vanishing $1$-form $\hat{\eta}$ on $\tilde{S}$ from $h$ by  $$\hat{\eta}(\cdot):=\frac{h(\xi,\cdot)}{h(\xi,\xi)}.$$
	Since $\pi:\tilde{S}\to S$ is a $\mathbb{T}$-equivariant map and the flow of $\xi$ on $S$ does not have any fixed point, $\xi$ is nowhere vanishing on $\tilde{S}$, so $\hat{\eta}$ is well defined and $d\hat{\eta}$ is a basic form.

	And by \cite[Lemma 4.3]{B20}, the form defined by $\chi:=\omega_b|_{\tilde{S}}$ is a basic transverse K\"ahler form, where $\omega_b$ is the basic transverse K\"ahler form defined in Theorem \ref{tr metric on sing}.
	
	Now we prove that $\tilde{S}$ is homologically orientable by proving that the form $\chi^n$ is basic top form which is not exact, that is $[\chi^n]\neq 0.$
	
	Suppose that $[\chi^n]=0,$ that is $\chi^n=d\ga,$ where $\ga$ is a basic $(2n-1)$-form. Then
	$$ 0=\int_{\tilde{S}}\ga\wedge d\hat{\eta}
		=\int_{\tilde{S}}\chi^n\wedge\hat{\eta}
		>0 $$
	which is a contradiction. The first equality is due to the fact that there is non-zero $(2n+1)$-basic form and the last inequality is because $$ \chi^n\wedge\hat{\eta}=det(\chi_{i\bar{j}})(\sqrt{-1})^ndz_1\wedge d\bar{z}_1\wedge dz_2\wedge d\bar{z}_2\wedge\cdots\wedge dz_n\wedge d\bar{z}_n\wedge dx $$ for the foliated coordinate chart $U$ with the coordinates $(x,z_1,z_2,\cdots,z_n)$ and $det(\chi_{i\bar{j}})$ is a positive function on  $U\subset \tilde{S},$ for any folilated coordinate chart.
	Thus $0\neq [\chi^n]\in H^{n,n}_B(\tilde{S},\mathbb{R})$, that is $(\tilde{S},\tilde{\sF}_\xi)$ is homologically orientable.
\end{proof}

\begin{rem}\label{rem sasaki}
	The transverse K\"ahler manifold $\tilde{S}$ may not be a Sasaki manifold, as the metric $\hat{\chi}=\ddbar\rho$ constructed in Lemma \ref{metric on desing} is not a cone metric $\left(\tilde{S}\times\mathbb{R}_+,dt^2+t^2{h|}_{\tilde{S}}\right)$ for the cone over $\left(\tilde{S}, {h|}_{\tilde{S}}\right)$. But, by Theorem \ref{h orientable} and \cite{Masa}, there exists a Riemannian metric on $\tilde{S}$ such that the flow of $\xi$ will be a geodesic and that metric may not be compatible with the transverse complex structure $\Phi$.
\end{rem}
	By Lemma 3.3 of \cite{B20}, there always exist a $\mathbb{T}$-invariant conical K\"ahler potential on $Y$ and we let $f_\xi$ be one such fixed $\mathbb{T}$-invariant conical K\"ahler potential on $Y$ with the corresponding transverse K\"ahler metric $\omega_T=\frac{1}{2}d(J(d\log f_\xi)|_{S_{reg}})=(\ddbarb\log f_\xi)|_{S_{reg}}$ on $S_{reg}$ and now, we look at the weak K\"ahler Ricci flat metric $\omega$ with the conical K\"ahler potential $r^2$ on $Y$ for which the equation \eqref{flat} is satisfied and $r\frac{\d}{\d r}=-J\xi.$	Then the function $r$ should be of the form
	$$r^2=f_\xi e^\vp,$$ where $\vp$ is some function defined on $Y$ which is invariant under the action of both $\xi$ and $-J\xi$ and locally bounded. Therefore, the smoothness of $\vp$ results in the smoothness of the conical potential $r^2.$ Thus, we try to prove the smoothness of $\vp.$

	Using the  $\mathbb{T}$-equivariant resolution of $Y$ which has klt-singularities, we have that \[\label{vol resolution} \pi^*K_Y=K_{\tilde{Y}}+\tilde{D}, \] where $\tilde{D}=D_+-D_-$ is the corresponding discrepancy divisor with $$D_+=\sum_{i}a_iD^+_i \text{ and }D_-=\sum_{j}b_jD^-_j$$ since $Y$ is $\QQ$-Gorenstein and has klt-singularities we have that $a_i>-1$ and $0<b_j<1$ and the sum is finite. The divisors $D^+_i$ and $D^-_j$ are $\mathbb{T}$-invariant and there exists sections $s^+_i$ and $s^-_j$ on the $\QQ$-line bundles which defines the divisors respectively.  Then by \eqref{vol resolution}, we have  $$(\sqrt{-1})^{(n+1)^2}\tilde{\Omega}\wedge\overline{\tilde{\Omega}}=\left(\prod_i\|s^+_i\|^{2a_i}_{h_i^+}\right)\left(\prod_j\|s_j^-\|^{-2b_j}_{h_j^-}\right)dV_{\tilde{Y}},$$
	where $dV_{\tilde{Y}}$ is a smooth $\mathbb{T}$-invariant positive volume form on $\tilde{Y}$ and $\tilde{\Omega}=\pi^*(\Omega)$.
	We define $$ \Psi^+=2\sum_{i}a_i\log\|s_i^+\|_{h_i^+} \text{ and } \Psi^-=2\sum_{j}b_j\log\|s_j^-\|_{h_j^-}. $$  then the above equation becomes $$(\sqrt{-1})^{(n+1)^2}\tilde{\Omega}\wedge\overline{\tilde{\Omega}}=e^{\Psi^+-\Psi^-}dV_{\tilde{Y}}.$$
	On the other hand we have a basic transverse K\"ahler metric $\omega_b$ on $\tilde{Y}\smallsetminus\pi^{-1}(y_0)$ such that the restriction of $\omega_b$ to $\tilde{S}$ provides us the transverse K\"ahler metric $\chi$ and by Theorem \ref{h orientable}, we have $[\chi^n]\neq 0$.
		
	Also there exist a positive constant $A$ such that $\Psi^+,\Psi^-\in PSH(\tilde{S},\xi,A\chi)$.
	Since, the discrepancy divisor $\tilde{D}$ has normal crossing and $Y$ has \textit{klt}-singularities, we can find a $q>1$ such that $-qb_j>-1$, for all $j$. This implies that $e^{-\Psi^-}\in L^q(\tilde{S},\chi^n\wedge\hat{\eta}).$
\begin{thm}[Lemma 4.6 of \cite{B20}]
	A conical smooth function $r^2$ satisfies the equation \eqref{flat} on $Y_{reg}$ if and only if $\tilde{\vp}$ satisfies the following equation on $\tilde{S}\cap\mathcal{U}$, $$\left(\tilde{\omega}+\ddbarb\tilde{\vp}\right)^n=e^{-(n+1)\tilde{\vp}}e^{(n+1)(\Psi^+-\Psi^-)}\chi^n,$$ where $\Psi^+$ and $\Psi^-$ are the functions defined above.
\end{thm}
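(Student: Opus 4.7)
My plan is to translate the cone equation \eqref{flat} on $Y_{reg}$ into a transverse Monge-Amp\`ere equation on $\tilde S\cap\mathcal U$ by pulling back along $\pi$ and exploiting the $\TT$-equivariance. Both sides of \eqref{flat} are $\TT$-invariant $(n+1,n+1)$-forms sharing the common $L_{-J\xi}$-weight $2(n+1)$, and each can be written as (radial $2$-form)$\wedge$(basic transverse $(n,n)$-form); equating the two transverse factors on the slice $\tilde S=\{\pi^*f_\xi=1\}$ will yield the claimed identity.

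For the left-hand side I would set $u=\log\tilde r^2=\log(\pi^*f_\xi)+\tilde\vp$, so that $u$ is $\xi$-invariant with $L_{-J\xi}u=2$. Then $\omega=\ddbar e^u=\tilde r^2(\ddbar u+\sqrt{-1}\,\d u\wedge\dbar u)$, and $\ddbar u=d\eta$ is a basic transverse $(1,1)$-form: $\iota_\xi d\eta=0$ follows from $\eta(\xi)=1$, and $\iota_{-J\xi}d\eta=L_{-J\xi}\eta=0$ since $-J\xi$ is holomorphic and $\eta=J\,d\log\tilde r$. Hence $\ddbar u$ has transverse rank at most $n$, so $(\ddbar u)^{n+1}=0$, and the binomial expansion of $(\ddbar u+\sqrt{-1}\,\d u\wedge\dbar u)^{n+1}$ collapses to
\[
\omega^{n+1}=(n+1)\,\tilde r^{2(n+1)}\,\sqrt{-1}\,\d u\wedge\dbar u\wedge(\ddbarb u)^n.
\]
A dimension count keeps only the purely radial component of $\sqrt{-1}\,\d u\wedge\dbar u$, which is a positive multiple of $d\log\tilde r\wedge\eta$, and the surviving transverse factor is $(\tilde\omega+\ddbarb\tilde\vp)^n$ with $\tilde\omega:=\ddbarb\log(\pi^*f_\xi)$.

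For the right-hand side, the adjunction identity recalled in the paper gives
\[
\bigl((\sqrt{-1})^{m(n+1)^2}\Omega\wedge\bar\Omega\bigr)^{1/m}\big|_{\tilde Y\cap\mathcal U}=e^{\Psi^+-\Psi^-}\,dV_{\tilde Y},
\]
and the same radial/basic factorization decomposes $dV_{\tilde Y}=H\cdot d\log\tilde r\wedge\eta\wedge\chi^n$ where $H$ carries Reeb weight $2(n+1)$. Matching the common radial factor with the LHS on $\tilde Y\cap\mathcal U$ and then restricting to $\tilde S$, where $\tilde r^{2(n+1)}=e^{(n+1)\tilde\vp}$, produces the factor $e^{-(n+1)\tilde\vp}$ on the right; the surviving, $-J\xi$-invariant part of $H$ is absorbed into the normalization of $\chi^n$. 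The converse direction is then formal: any $\tilde\vp$ solving the transverse equation lifts uniquely to a $\TT$-invariant conical potential $r^2=f_\xi e^\vp$ on $Y_{reg}$, and the chain of identifications above runs in reverse to recover \eqref{flat}.

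The main technical step is the bookkeeping of Reeb weights that converts the single-power $e^{\Psi^+-\Psi^-}$ coming from adjunction into the stated $e^{(n+1)(\Psi^+-\Psi^-)}$. This reflects the Reeb weight $n+1$ carried by sections of $K_Y$: when the weight-$2(n+1)$ piece of $H$ is folded into the discrepancy exponent upon restriction to $\tilde S$, the $\TT$-equivariant trivializations of the sheaves $\mathcal O(D_i^\pm)$ rescale the coefficient of $\Psi^+-\Psi^-$ by $n+1$. Verifying this rescaling carefully — while tracking the interplay between the Hermitian metrics $h_i^\pm$, the chosen equivariant structures on the divisor sheaves, and the homogeneous part of $H$ — is the place where the proof is most delicate; once it is done the equation of the proposition follows.
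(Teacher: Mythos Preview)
Your overall strategy --- factoring $\omega^{n+1}$ as a radial $2$-form wedged against a transverse $(n,n)$-form and then restricting to the link --- is exactly the paper's approach. The paper executes it in explicit local holomorphic coordinates $(w,z)$ adapted to the Reeb flow (with $\xi=\partial_v$, $-J\xi=\partial_u$) rather than in your abstract weight/basic language, but the underlying computation is the same, and your treatment of the left-hand side is correct.

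The gap is in your final paragraph. The mechanism you propose for turning $e^{\Psi^+-\Psi^-}$ into $e^{(n+1)(\Psi^+-\Psi^-)}$ --- that the Reeb weight $n+1$ carried by sections of $K_Y$ rescales the discrepancy coefficients when one restricts to $\tilde S$ --- does not exist. The Hermitian metrics $h_i^\pm$ are chosen $\TT$-invariant, so the norms $\|s_i^\pm\|_{h_i^\pm}$, and hence $\Psi^\pm$, are already basic functions on $\tilde Y\setminus\pi^{-1}(y_0)$; restricting a basic function to the slice $\tilde S$ is plain restriction, with no rescaling of exponents. The ``equivariant trivializations of $\mathcal O(D_i^\pm)$'' cannot manufacture an extra factor of $n+1$ here. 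The paper itself does not derive this factor either: after obtaining $(\omega_T+\ddbarb\vp)^n=e^{-(n+1)\vp}(\sqrt{-1})^{(n+1)^2}\Omega\wedge\bar\Omega(\xi,J\xi,\cdot)$ on $S_{reg}$, it simply asserts the pulled-back equation on $\tilde S\cap\mathcal U$ and defers to the cited reference. Whatever accounts for the exponent $(n+1)$ on $\Psi^+-\Psi^-$ --- and given the earlier displayed identity $(\sqrt{-1})^{(n+1)^2}\tilde\Omega\wedge\bar{\tilde\Omega}=e^{\Psi^+-\Psi^-}dV_{\tilde Y}$ it may well be a normalization choice or a misprint --- must come from how the contracted volume form $dV_{\tilde Y}(\xi,J\xi,\cdot)$ is identified with $\chi^n$, not from any weight action on $\Psi^\pm$. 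Your explanation of this step should be discarded rather than elaborated.
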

\begin{proof}
	Let $Q=\log r^2$, then $\ddbar r^2=e^Q(\ddbar Q+\sqrt{-1}\d Q\wedge \dbar Q)$ and the equation \eqref{flat} becomes \[\label{y to s eqn} (\ddbar Q)^n\wedge\sqrt{-1}\d Q\wedge\dbar Q=e^{-(n+1)Q}(\sqrt{-1})^{(n+1)^2}\Omega\wedge\overline{\Omega}. \]
	Now let $(w=u+\sqrt{-1}v,z)$ be a holomorphic coordinate of a neighborhood in $\mathcal{U}$ such that $\xi=\frac{\d}{\d v}$ and $-J\xi=\frac{\d}{\d u}$ and define $\phi=Q-2u$, then $\xi\phi=(J\xi)\phi=0$. Hence $\phi$ is a basic function which depends only on the coordinate $z$ in $(w,z)$ and since $\ddbar u=0$ we get 
\begin{align*}
	(\ddbar Q)^n\wedge\sqrt{-1}\d Q\wedge\dbar Q=&(\ddbar\phi(z))^n\wedge\sqrt{-1}\d u\wedge\dbar u\\
		=&(\ddbar Q)^n\wedge\sqrt{-1}dw \wedge d\bar{w}\\
		=&(\ddbarb Q)^n\wedge du\wedge dv
\end{align*} 
	on $\mathcal{U}$ and also $Q=\log f_\xi+\vp$ by definition. If we use these in \eqref{y to s eqn} and contracting with the vector fields $-J\xi$ and $\xi$ we get that $$(\ddbarb Q)^n=(f_\xi)^{-(n+1)}e^{-(n+1)\vp}(\sqrt{-1})^{(n+1)^2}\Omega\wedge\overline{\Omega}(\xi,J\xi,\cdot)$$ on $\mathcal{U}.$ When we restrict the above equation to $S_{reg}$ where we have $f_\xi=1$, we get that $$(\omega_T+\ddbarb\vp)^n=e^{-(n+1)\vp}(\sqrt{-1})^{(n+1)^2}\Omega\wedge\overline{\Omega}(\xi,J\xi,\cdot).$$ If we pull back this equation using the map $\pi:\tilde{S}\to S$ and using \eqref{y to s eqn}, we get \[\label{deg eqn} (\tilde{\omega}+\ddbarb\tilde{\vp})^n=e^{-(n+1)\tilde{\vp}}e^{(n+1)(\Psi^+-\Psi^-)}\chi^n \] on $\pi^{-1}(S_{reg})$, where $\tilde{\omega}=\pi^*\omega_T$ and $\tilde{\vp}=\pi^*\vp$.
\end{proof}
	The equation \eqref{deg eqn} is a degenerate transverse complex Monge-Amp\`ere equation on the homologically orientable transverse K\"ahler manifold. Now we perturb the equation a bit to get a non-degenerate equation and we try to get the estimate for the perturbed equation using Theorem \ref{mt}. We proceed as follows.
	
	Let $\psi^-=\Psi^--\vp$. Since $\Psi^+,\psi^-\in PSH(\tilde{S},\xi,A\chi)$ for some large enough $A>0,$ by Theorem \ref{approx} there exist sequences of basic smooth functions $\psi_j^+,\psi_j^-\in PSH(\tilde{S},\xi,A\chi)$ which decreases and converges uniformly to $\Psi^+$ and $\psi^-$ respectively. 

	We let   $\omega_t:=\tilde{\omega}+t\chi$ for $t\in(1,0]$ and consider the following transverse complex Monge-Amp\`ere equation on $\tilde{S}$, which is the perturbed form of the equation \eqref{deg eqn}, \[\label{perturbed eqn} \left(\omega_t+\ddbarb\vp_{j,t}\right)^n=e^{(n+1)(\psi_j^+-\psi_j^-)}\chi^n, \] where $\vp_{j,t}$ is the solution with $\omega_t+\ddbarb\vp_{j,t}>0$ which exists by transverse Calabi-Yau theorem \cite{aziz} and $\psi_j^+$ are normalized so that $$\int_{\tilde{S}}\omega_t^n\wedge\hat{\eta}=\int_{\tilde{S}}\left(\omega_t+\ddbarb\vp_{j,t}\right)^n\wedge\hat{\eta}=\int_{\tilde{S}}e^{(n+1)(\psi_j^+-\psi_j^-)}\chi^n\wedge\hat{\eta},$$ 
	where $\hat{\eta}$ is the $1$-form defined in Theorem \ref{h orientable}. Since $\tilde{\omega}\geq 0$, we have that $\Nu_{\omega_t}=0$, then by Theorem \ref{mt}, we can find a positive constant $C_j:=C_j(n,p,\xi,\tilde{\omega},\chi,\|e^{(n+1)(\psi_j^+-\psi_j^-)}\|_{L^1(\log L)^p})$ such that $$0\leq -\vp_{j,t}\leq C_j,\text{ for all } t\in(0,1].$$ As $\psi_j^+,\psi_j^-$ decreases and converges uniformly to $\Psi^+,\psi^-\in L^q(\tilde{S},\chi^n\wedge\hat{\eta})$ and by dominated convergence theorem, there exist a constant $C:=C(n,p,\xi,\tilde{\omega},\chi,\|e^{(n+1)(\Psi^+-\psi^-)}\|_{L^1(\log L)^p})$ such that $C_j\leq C, \text{ for all } j.$ Thus \[\label{c0 est} 0\leq -\vp_{j,t}\leq C. \]
\subsection{Higher order estimates:} 
	As we have the $L^\infty$ estimate 	\eqref{c0 est} for the solutions of the perturbed equation \eqref{perturbed eqn}, in this section we try to establish the higher order estimate using the $L^\infty$ estimate. For that we need the following lemma, as it follows easily once we pick the foliated coordinate chart locally and proceed as in the case of the K\"ahler manifold, we skip the proof. Interested reader can refer to \cite[Appendix]{TTR} for the proof.
\begin{lem}[Transverse Aubin-Yau inequality]\label{tr Aubin-Yau}
	Let $\theta$ and $\vartheta$ be any two transverse K\"ahler metrics on the homologically orientable transverse K\"ahler manifold $\left(\tilde{S},\tilde{\sF},\theta\right)$ and $\ka$ be the lower bound of the transverse bi-sectional curvature of $\theta$, then
	$$\Delta_{\vartheta}\left(\log\La_{\theta}\vartheta\right)\geq -\frac{\La_{\theta}Ric^T (\vartheta)}{\La_{\theta}\vartheta}-\ka\La_{\vartheta}\theta.$$
\end{lem}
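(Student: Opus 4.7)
The plan is to reduce the inequality to its classical Kähler analogue by computing in a foliated chart and then checking that the resulting identity descends to a basic tensor statement on $\tilde{S}$. Fix $p \in \tilde{S}$ and pick an adapted coordinate chart $(x, z_1, \ldots, z_n)$ around $p$ with $\xi = \d/\d x$ and $(z_1, \ldots, z_n)$ transverse holomorphic coordinates. In such a chart, basic forms and functions are independent of $x$, and on the transverse slice $\ddbarb$ acts as the ordinary $\sqrt{-1}\d\dbar$ in the $z$-variables. Both $\theta$ and $\vartheta$ are closed basic $(1,1)$-forms that are transversely positive, so they restrict to honest Kähler metrics on the slice; consequently the operators $\Delta_\vartheta$, $\La_\theta$, the transverse Ricci $Ric^T(\vartheta)$, and the transverse bisectional curvature of $\theta$ coincide with their Kähler counterparts computed on the slice.

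Next, at $p$ I would perform a simultaneous diagonalization, choosing the transverse $z$-coordinates so that $\theta_{i\bar j}(p) = \delta_{ij}$ and $\vartheta_{i\bar j}(p) = \la_i \delta_{ij}$ with $\la_i > 0$, and further arrange $\d_k \theta_{i\bar j}(p) = 0$ (transverse normal coordinates for $\theta$, permissible because $\theta$ is closed). Setting $f := \La_\theta \vartheta = \sum_i \la_i$, the pointwise logarithmic identity
$$\Delta_\vartheta \log f = \frac{\Delta_\vartheta f}{f} - \frac{|\d_b f|^2_\vartheta}{f^2}$$
reduces everything to bounding $\Delta_\vartheta f$ from below. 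Expanding
$$\Delta_\vartheta f = \vartheta^{a\bar b}\d_a\dbar_b\bigl(\theta^{i\bar j}\vartheta_{i\bar j}\bigr)$$
and commuting the covariant derivatives through using the transverse Bianchi identity produces three contributions at $p$: first, a Ricci-of-$\vartheta$ term that evaluates to $-\La_\theta Ric^T(\vartheta)$; second, a nonnegative cubic-derivative term $\sum_{a,i,j} \la_a^{-1}|\d_a \vartheta_{i\bar j}|^2 / \la_i$ which, via Cauchy--Schwarz, dominates $|\d_b f|^2_\vartheta / f$ and therefore absorbs the negative gradient contribution from the log-identity; and third, a curvature-of-$\theta$ term of the form $\sum_{i,a} R^\theta_{i\bar i a \bar a}\,\la_a/\la_i$, which by the hypothesis $R^\theta_{i \bar i a \bar a}\geq \ka$ is bounded below by $\ka \sum_{i,a} \la_a/\la_i = \ka\, f\, \La_\vartheta \theta$. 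Dividing through by $f$ delivers the stated inequality.

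The only real obstacle is global bookkeeping: one must verify that every step of the classical computation respects the basic condition and assembles into a globally defined inequality on $\tilde{S}$. Since $\xi$ is Killing and commutes with $\d_b$ and $\dbar_b$, the transverse Chern connection preserves the algebra of basic tensors; the diagonalization and the $\theta$-normal frame at $p$ can be chosen inside a basic chart; and the resulting quantities $\Delta_\vartheta \log \La_\theta \vartheta$, $\La_\theta Ric^T(\vartheta)$, $\La_\vartheta \theta$ are all genuinely basic, so the pointwise inequality at $p$ extends to a global inequality. No new difficulty beyond the classical Kähler Aubin--Yau computation arises, which is why the author defers the details to \cite{TTR}.
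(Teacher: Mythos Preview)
Your proposal is correct and follows exactly the approach the paper indicates: the paper explicitly skips the proof, saying only that one picks a foliated coordinate chart and proceeds as in the classical K\"ahler case (deferring details to \cite{TTR}), which is precisely the reduction you carry out. The simultaneous diagonalization, $\theta$-normal coordinates, expansion of $\Delta_\vartheta f$ into Ricci, third-order, and curvature-of-$\theta$ pieces, and the Cauchy--Schwarz absorption of the gradient term are the standard Aubin--Yau steps, and your check that every quantity is basic is the only additional bookkeeping the transverse setting requires.
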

	As the second-order estimate is quite standard we provide only a sketch of the proof.
\begin{lem}\label{higher order}
	Let $K\subset\tilde{S}\cap\mathcal{U}$ be any compact set. Then there exists a positive constant $A$ depends only on $K$ such that $$|\Delta_\chi\vp_{j,t}|\leq Ae^{-\psi_j^-},$$ on $K$, where $\chi=\omega_b|_{\tilde{S}}$ is a transverse K\"ahler form on $\tilde{S}.$
\end{lem}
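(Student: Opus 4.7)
The plan is to adapt Yau's classical Laplacian estimate to the transverse K\"ahler setting, handling the singular right-hand side of \eqref{perturbed eqn}. Write $\omega' = \omega_t + \ddbarb \vp_{j,t}$ throughout. First I apply the transverse Aubin-Yau inequality (Lemma \ref{tr Aubin-Yau}) with $\theta=\chi$ and $\vartheta = \omega'$ to obtain
\[
\Delta_{\omega'}\log\Lambda_\chi\omega' \geq -\frac{\Lambda_\chi\mathrm{Ric}^T(\omega')}{\Lambda_\chi\omega'} - \kappa\,\Lambda_{\omega'}\chi,
\]
where $\kappa$ is a lower bound on the transverse bisectional curvature of the fixed form $\chi$. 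Differentiating \eqref{perturbed eqn} gives $\mathrm{Ric}^T(\omega') = \mathrm{Ric}^T(\chi) - (n+1)\ddbarb(\psi_j^+ - \psi_j^-)$; since $\psi_j^\pm \in PSH(\tilde S, \xi, A'\chi)$ uniformly in $j$, we have the positivity $\ddbarb\psi_j^\pm + A'\chi \geq 0$, in particular $\Lambda_{\omega'}\ddbarb\psi_j^\pm \geq -A'\,\Lambda_{\omega'}\chi$.

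The key construction is the auxiliary function
\[
H = \log\Lambda_\chi\omega' + \lambda\,\psi_j^- - A\,\vp_{j,t} + \epsilon\,\Phi_b,
\]
for suitable large constants $\lambda,A>0$ and a small $\epsilon>0$, where $\Phi_b$ is the $\xi$-equivariant barrier of Theorem \ref{tr metric on sing}. The three corrections play complementary roles. First, $\lambda\psi_j^-$ produces $\lambda\Lambda_{\omega'}\ddbarb\psi_j^-$ in $\Delta_{\omega'}H$ which, together with the elementary inequality $\Lambda_\chi\alpha \leq \Lambda_\chi\omega'\cdot\Lambda_{\omega'}\alpha$ (valid for any non-negative $(1,1)$-form $\alpha$, applied to $\alpha=\ddbarb\psi_j^-+A'\chi$), offsets the ill-signed singular contribution $-(n+1)\Lambda_\chi\ddbarb\psi_j^-/\Lambda_\chi\omega'$ arising from the Ricci identity. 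Second, $-A\vp_{j,t}$ contributes $A\Lambda_{\omega'}\omega_t-An$ via $\Delta_{\omega'}\vp_{j,t}=n-\Lambda_{\omega'}\omega_t$; on a compact $K\subset\tilde S\cap\mathcal{U}$ the forms $\tilde\omega$ and $\chi$ are smoothly equivalent, so $\omega_t\geq c_K\chi$ on $K$ and for $A$ large this dominates $-\kappa\Lambda_{\omega'}\chi$ together with any residual $\Lambda_{\omega'}\chi$-terms from the first step. Third, $\epsilon\Phi_b\to -\infty$ at $\pi^{-1}(y_0)$ forces the supremum of $H$ to be attained at an interior point $p_0$ of a relatively compact subset of $\mathcal{U}$.

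At such $p_0$, $\Delta_{\omega'}H(p_0)\leq 0$ combined with the Aubin-Yau bound, the cancellations above, and the AM-GM consequence $\Lambda_\chi\omega' \leq n\,e^{(n+1)(\psi_j^+-\psi_j^-)}(\Lambda_{\omega'}\chi)^{n-1}$ of the Monge-Amp\`ere equation yields the pointwise bound $\Lambda_\chi\omega'(p_0)\leq C\,e^{-\psi_j^-(p_0)}$, after absorbing the uniformly-bounded-above $\psi_j^+$ into the constant. Substituting into $H(x)\leq H(p_0)$, using the $L^\infty$-bound \eqref{c0 est} on $\vp_{j,t}$ and the boundedness of $\Phi_b$ on $K$, gives $\Lambda_\chi\omega' \leq A_K\,e^{-\psi_j^-}$ on $K$; since $\omega'\geq 0$ and $\Lambda_\chi\omega_t$ is uniformly bounded on $K$, we conclude $|\Delta_\chi\vp_{j,t}|=|\Lambda_\chi\omega'-\Lambda_\chi\omega_t|\leq A_K\,e^{-\psi_j^-}$ on $K$. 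The main obstacle is the uniform-in-$j$ control of the unbounded term $\Lambda_\chi\ddbarb\psi_j^-$; this is precisely why the coefficient $\lambda$ in $\lambda\psi_j^-$ must be balanced against the supplementary positive-form inequality, so that the singular piece is exactly absorbed by the $A$-large $\Lambda_{\omega'}\omega_t$ term produced by the classical $-A\vp$ trick.
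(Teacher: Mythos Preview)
Your overall strategy (Aubin--Yau plus a barrier plus the $\psi_j^-$ absorption trick) matches the paper's, but there is a genuine gap in how the positive $\Lambda_{\omega'}\chi$ term is produced at the maximum point.

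You write that $-A\vp_{j,t}$ contributes $A\Lambda_{\omega'}\omega_t$, and then use ``on a compact $K\subset\tilde S\cap\mathcal U$ the forms $\tilde\omega$ and $\chi$ are smoothly equivalent, so $\omega_t\ge c_K\chi$.'' But the maximum point $p_0$ of $H$ is \emph{not} a~priori in $K$; the barrier only forces $p_0$ into the open set $\tilde S\cap\mathcal U$, not into any fixed compact subset of it. Near the exceptional divisor $\tilde\omega$ degenerates, so as $t\to 0$ the term $A\Lambda_{\omega'}\omega_t$ gives no uniform lower bound by $\Lambda_{\omega'}\chi$ at $p_0$. With $\epsilon$ taken \emph{small}, the contribution $\epsilon\Lambda_{\omega'}\chi$ from $\Delta_{\omega'}(\epsilon\Phi_b)$ cannot absorb $-\kappa\Lambda_{\omega'}\chi$ and the residual $-(n{+}1)A'\Lambda_{\omega'}\chi$ from the $\psi_j^-$ step, so the differential inequality at $p_0$ yields nothing.

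The paper avoids this by building the barrier into the potential: it sets $\rho_\vep=\vp_{j,\vep}-\psi$ with $\psi=\Phi_b-\log\pi^*f_\xi$, so that $\omega_\vep'=\omega_\vep+\ddbar\rho_\vep$ with the \emph{non-degenerate} reference $\omega_\vep=(1+\vep)\chi$ on $\tilde S\cap\mathcal U$. Then $-A\Delta_{\omega'}\rho_\vep$ produces $A(1+\vep)\Lambda_{\omega'}\chi$ with $A$ large, valid everywhere in $\tilde S\cap\mathcal U$. Equivalently, in your formulation the barrier $\Phi_b$ must carry a \emph{large} coefficient (of the same order as $A$), not a small $\epsilon$; once you do that, your argument goes through. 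A minor geographic correction: $\Phi_b\to-\infty$ along $\partial(\tilde S\cap\mathcal U)$, i.e.\ along the exceptional locus in $\tilde S$, not at $\pi^{-1}(y_0)$ (which is not even contained in $\tilde S$).
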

\begin{proof}	
	Let $\psi:=\Phi_b-\log \pi^*(f_\xi).$ Then $\psi$ is locally bounded and $\psi\to -\infty$ along the boundary of $\tilde{S}\cap\mathcal{U}.$ We define the quotient map $\pi_1:\mathcal{U}\to \tilde{S}\cap\mathcal{U}$ using the action of $-J\xi$ on $\tilde{Y}$ which also respects the transverse K\"ahler structures of both the sides, and we also have $\pi:\tilde{Y}\to Y$ a $\mathbb{T}$-equivariant resolution map. Now, let $\theta=\pi^*(\ddbarb\log f_\xi)$ a semi-positive transverse form on $\tilde{Y}\smallsetminus\pi^{-1}(y_0)$, then $(\theta+\ddbarb\psi)|_{\mathcal{U}}$ is a basic transverse K\"ahler form $\omega_b$ constructed in Theorem \ref{tr metric on sing} on $\mathcal{U}$ and $\theta|_{\tilde{S}}=\tilde{\omega}.$
	
	If we take $\psi=\Phi_b-\log \pi^*(f_\xi)$, $\omega_\vep=\theta+\ddbar\psi+\vep\omega_b$ and $\omega_\vep'=\theta+\vep\omega_b+\ddbar(\pi_1^*\vp_{j,\vep})$ over $\mathcal{U}$ for the quantities considered in Theorem B.1 of \cite[Appendix. B]{BBEGZ} then the proof follows easily by mimicking the proof step-by-step, as the calculations carried in \cite[Theorem B.1]{BBEGZ} are local in nature and we have the transverse Aubin-Yau inequality(see, Lemma \ref{tr Aubin-Yau}). However, we just add a few lines about where the proof of Theorem B.1 in \cite{BBEGZ} needed certain modifications to the foliated case to prove the estimate. 
	
	Let $\ka$ be the lower bound of the transverse bi-sectional curvature of the transverse K\"ahler metric $\chi=\omega_b|_{\tilde{S}}$ on $\tilde{S}.$ Since, $\sL_{J\xi}\omega_b=0 $, this $\ka$ will be the lower bound of the transverse bi-sectional curvature of $\omega_b$ on $(\tilde{Y}\smallsetminus\pi^{-1}(y_0),\sF).$ Then the transverse Aubin-Yau inequality holds for the forms $\omega_\vep$ and $\omega_\vep'$ on $\mathcal{U}$ also, since $\sL_{J\xi}\omega_\vep=\sL_{J\xi}\omega_\vep'=0.$ With this argument we can get a inequality similar to $(B.4)$ of \cite[Theorem B.1]{BBEGZ} for the foliated case as following \[\label{tr aub-yau ineq}\Delta_{\omega_\vep}\left(\log\La_{\omega_\vep}\omega_\vep'\right)\geq -\frac{\La_{\omega_\vep}Ric^T(\omega_\vep')}{\La_{\omega_\vep}\omega_\vep'}-A\La_{\omega_\vep'}\omega_\vep\] on $\mathcal{U}.$ As the functions on both side of the inequality are basic functions on $(\tilde{Y}\smallsetminus\pi^{-1}(y_0),\sF)$ and the codimension of the foliations $\sF$ and $\sF_\xi$ are equal to $n$, the inequality also holds true on $\tilde{S}$ with $\omega_\vep|_{\tilde{S}}$ and $\omega_\vep'|_{\tilde{S}}$, since $Ric^T(\omega_\vep')|_{\tilde{S}}=Ric^T(\omega_\vep'|_{\tilde{S}})$.
	Now let $$ \al_\vep:=\omega_\vep|_{\tilde{S}\cap\mathcal{U}}\text{ and } \al_\vep':=\omega_\vep'|_{\tilde{S}}.$$
	Then $\al_\vep=(1+\vep)\chi|_{\tilde{S}\cap\mathcal{U}}$ and $\al_\vep'=\omega_{\vep,\vp_{j,\vep}}$ and from the equation \eqref{perturbed eqn}, we have $$ -Ric^T(\al_\vep')\geq -A_1\chi-(n+1)\ddbarb\psi_j^-,$$ for some large enough constant $A_1>0.$ This with the inequality \eqref{tr aub-yau ineq} and the fact that $\sL_{J\xi}\omega_\vep=\sL_{J\xi}\omega_\vep'=0 $ implies that \[\label{la aubin-yau}\Delta_{\al_\vep}\left(\log\La_{\al_\vep}\al_\vep'\right)\geq -A_2\frac{\Delta_{\chi}\psi_j^-}{\La_{\al_\vep}\al_\vep'}-
	A_3\La_{\al_\vep'}\al_\vep\] on $\tilde{S}\cap\mathcal{U}$, where $A_2 \text{ and } A_3$ are some positive constants.
	Since $A\omega_\vep+\ddbar(\pi_1^*\psi_j^-)\geq 0$, we have \[ 0\leq An+\Delta_{\omega_\vep}(\pi_1^*\psi_j^-)\leq\left(A\La_{\omega_\vep'}(\omega_\vep)+\Delta_{\omega_\vep'}(\pi_1^*\psi_j^-)\right)\La_{\omega_\vep}(\omega_\vep') \]
	Using this in the equation \eqref{tr aub-yau ineq}, we get \[ \Delta_{\omega_\vep'}\left(\log\La_{\omega_\vep}(\omega_\vep')+\pi_1^*(\psi_j^-)\right)\geq -A\La_{\al_\vep'}(\omega_\vep) .\]
	Now as in \cite[Theorem B.1]{BBEGZ}, we also take the functions $\rho_\vep:=\pi_1^*(\vp_{j,\vep})-\psi$  and  $$H=\log\left(\La_{\omega_\vep}\omega_\vep'\right)+\pi_1^*(\psi_j^-)-A\rho_\vep,$$ on $\mathcal{U}$, where $A$ is a constant large enough and depending on $\ka.$ We also have $\omega_\vep'=\omega_\vep+\ddbar\rho_\vep$. 
	
	Since, $J\xi(\psi)=J\xi(H)=0$ and $\psi\to -\infty$ at $\d\mathcal{U}$, there exists $x\in \tilde{S}\cap\mathcal{U}$ in which $H$ attains its maximum.
	As we have the $L^\infty$-estimate \eqref{c0 est} and by proceeding as in \cite[Theore B.1]{BBEGZ}, we get the required estimate.
\end{proof}
\subsection{Proof of Corollary \ref{main thm}}
\begin{proof}[\textbf{Proof of Corollary \ref{main thm}}]
	We have the $L^\infty$ estimate \eqref{c0 est} and the Laplacian estimate in Theorem \ref{higher order} for the solutions $\vp_{j,t}$. As $\psi_j^-$ converges uniformly to $\psi^-$ and $\psi^-$ is locally bounded, on any foliated coordinate we can find constant $A>0$ such that $|\Delta_\chi\vp_{j,t}|\leq A$. Then we can find a sequence $t(j)$ such that $t(j)\to 0$ as $j\to \infty$ and $\{\vp_{j,t(j)}\}$ converges to $\tilde{\vp}_0$ in $C^1$ locally. Moreover $\tilde{\vp}_0$ satisfies the equation $$(\tilde{\omega}+\ddbarb\tilde{\vp}_0)^n=e^{-(n+1)\tilde{\vp}}e^{(n+1)(\Psi^+-\Psi^-)}\chi^n$$ on $\tilde{S}\cap\mathcal{U}$ with $\tilde{\omega}+\ddbarb\tilde{\vp}\geq 0$. Since $\tilde{\omega}|_E\equiv 0$, by maximum principle $\tilde{\vp}_0$ is constant on each irreducible component of $E$ and $\tilde{\vp}_0$ is unique up to addition of a constant. So, $\tilde{\vp}_0=\tilde{\vp}+c$, for some constant $c.$
	Since $$\tilde{\vp}\in L^\infty(\tilde{S}) \text{ and } \Delta_\chi\tilde{\vp}\in L^\infty_{loc}(\tilde{S}),$$ by Evans-Krylov theory we get the $C^{2,\al}$ bound and by Schauder estimate we get the higher order estimates. Hence, $\tilde{\vp}$ is smooth.
	
	Thus, the function $\tilde{\vp}$ can be pushed forward as a function $\vp$ on $S$ and it is smooth on $S_{reg}$, this implies that $r^2$ is smooth on $Y_{reg}$, since $r^2=e^\vp f_\xi$.
	
\end{proof}

\subsection*{Acknowledgements:}
	The author would like to thank his PhD. advisor Ved V. Datar for suggesting this problem, for the helpful discussions and the suggestions for improving the first draft of this work. This research was supported by NBHM fellowship and the graduate program of the Indian Institute of Science.

\end{document}